\documentclass[12pt]{article}
\usepackage{amssymb, amscd, amsmath, stackrel}
\usepackage{comment}

\newtheorem{theorem}{Theorem}
\newtheorem{lemma}[theorem]{Lemma}
\newtheorem{cor}[theorem]{Corollary}
\newtheorem{prop}[theorem]{Proposition}

\newcommand{\llbracket}{[[}
\newcommand{\rrbracket}{]]}

\title{Obstructions to 
Deformation Quantization of Bundles}
\author{Vladimir Baranovsky, Greg Huey}
\date{March 10, 2026}

\begin{document}

\maketitle

\begin{center}
    ABSTRACT
\end{center}

Let $\left(M, \mathcal{O}_M \right)$ be a smooth algebraic variety over field $\kappa$ of characteristic $0$ with an algebraic symplectic form $\omega$, or a complex manifold with a holomorphic form $\omega$. Furthermore, let $E$ be a vector bundle over $\left(M, \mathcal{O}_M \right)$ and $\mathcal{O}_{\hbar}$ a deformation quantization of 
$\mathcal{O}_M$ compatible with $\omega$. 

Assuming that $E$ possesses a deformation quantization to order $\hbar^k$
we consider the problem of extending it to  order $\hbar^\ell$ for 
$\ell > k$,
and establish triviality of an obstruction class as a necessary condition for this extension to exist. Furthermore, in the case  $\ell \le 2k+1$, we prove that this condition is also sufficient.

\section{Introduction}

\subsection{Setup.}

Consider a smooth algebraic variety ($M$, $\mathcal{O}_M$) of dimension $2n$ 
over a field $\kappa$ of characteristic 
zero with an algebraic symplectic form $\omega$, or a complex manifold 
($M$, $\mathcal{O}_M$) of complex dimension $2n$ with 
a holomorphic form $\omega$.
We also assume that $M$ is equipped with a deformation quantization $\mathcal{O}_\hbar$
compatible with $\omega$, i.e. a sheaf (in Zariski, resp. analytic topology) of 
associative $\kappa\left[\left[\hbar\right]\right]$-algebras which is complete and separated in $\hbar$-adic
topology and equipped with an isomorphism $\mathcal{O}_\hbar/\hbar \mathcal{O}_\hbar
\simeq \mathcal{O}_M$ such that for two local sections $f, g$ of $\mathcal{O}_\hbar$
one has
$$
f * g - g * f  \equiv \hbar \omega \left(\overline{f}, \overline{g} \right) \ mod\ \hbar^2
$$
where $\overline{f}, \overline{g}$ are images of $f, g$ in $\mathcal{O}_M$, respectively. 

Given a vector bundle $E$ (we use the same notation for the locally free sheaf
of its algebraic, resp. holomorphic sections) and $k \in \left[0, \infty \right)$, its 
order $k$ deformation quantization is a  sheaf $E_k$ of locally free 
$\mathcal{O}_\hbar/ \hbar^{k+1} \mathcal{O}_\hbar$-modules $E_k$ with an 
isomorphism $E_k/ \hbar E_k \simeq E$ as sheaves of $\mathcal{O}_M$-modules. 
For $k = \infty$ we define an infinite order deformation quantization as 
a locally free sheaf $E_\hbar$ of $\mathcal{O}_\hbar$-modules with a similar
isomorphism $E_\hbar/ \hbar E_\hbar \simeq E$ as sheaves of $\mathcal{O}_M$-modules.
If we have an order $\ell$ quantization $E_{\ell}$ and an order $k$ quantization $E_k$
for $\ell > k$, we say that $E_{\ell}$ is an extension of $E_k$ if an isomorphism of sheaves of modules 
$E_{\ell}/\hbar^{k+1} E_{\ell} \simeq E_k$ is fixed. 

The goal of this paper is to study cohomological obstructions to existence of 
an order $\ell$ quantization extending an existing order $k$ quantization. 
The situation is easier for $\ell < 2k+2$, when the vanishing of a certain cohomology
class (depending on the choice of $E_k$) gives a necessary and sufficient condition. 
For $\ell \geq 2k+2$ (this includes the case $\ell = \infty$) this becomes a nonabelian 
cohomology problem. However, one may still study its abelian ``shadows", i.e. 
provide cohomology classes the vanishing of which is necessary (but not sufficient)
for existence of $E_{\ell}$.

\bigskip
\noindent
This paper is organized as follows. In Section 2 for 
$k +1 \leq \ell \leq 2k+1$
we define a sheaf $End(E)_{(k, \ell]}$ and state Theorem 1, 
asserting that the second cohomology of this sheaf contains
the obstruction to extending an order $k$ quantization 
to an order $\ell$ quantization, and whenever such obstruction 
vanishes, the first cohomology parameterizes all such extentions. 

For $\ell \geq 2k+2$ we define a similar sheaf $End_{\mathcal{O}_\hbar}^+
(E_k)$, an extension of $End_{\mathcal{O}_\hbar}
(E_k)$ by $\mathcal{O}_M$, and show that its second cohomology 
contains an obstruction class for the problem of extending $E_k$
to order $\ell$ for $\ell \geq 2k+2$. We state the main results of 
the paper, Theorems 1 and 2, regarding obstructions to extending
order $k$ quantizations to order $\ell$ quantizations. 

In Section 3 we recall some basic definitions related to 
Harish-Chandra pairs, relative Lie algebra cohomology and 
introduce the main object of study, the Harish-Chandra pair given by 
automorphisms and derivations of the pair $(\mathcal{D}, \left(\mathcal{D}/
\hbar^{k+1} \mathcal{D}\right)^{\oplus e})$, where $\mathcal{D}$ is 
the formal version of Weyl algebra and $e$ is the rank of $E$. The
reason for looking at the above pair is the formal Darboux lemma, claiming
that the pair $(\mathcal{O}_\hbar, E_k)$ becomes isomorphic 
(non-canonically) to $(\mathcal{D}, \left(\mathcal{D}/
\hbar^{k+1} \mathcal{D}\right)^{\oplus e})$, after formal completion 
at any point $x \in M$. We remind the argument in Section 4, see
Theorem \ref{completion}. Putting together the spaces of \textit{all} such isomorphisms
for various $x$
produces a torsor over $M$, over a proalgebraic group, and this torsor
comes with an additional \textit{transitive Harish Chandra  structure}
which implies that any associated vector bundle of the torsor has
a flat algebraic connection. In a particular case, taking flat sections
of such a connection gives us $E_k$, hence the quantization may
be recovered from the torsor. Using the general criterion for lifts of 
transitive Harish Chandra torsors, as established in 
\cite{BezKal03} we prove the main results (Theorems 1 and 2) at the 
end of Section 4. 

Section 5 explains our results in the language of Fedosov-type
connections on the jet bundle of $E_k$, which admits a
$C^\infty$ isomorphism with a standard bundle constructed from
$E$ and a cotangent bundle of $M$. Section 6 outlines some 
standard special cases. 

In Section 7 we treat the problem of lifting a morphism 
$\varphi_k: E_k \to F_k$ (as sheaves of $\mathcal{O}_\hbar$-modules) 
to a morphism $E_\ell \to F_\ell$ of order $\ell$ quantizatons. 
Similarly to the above, there is an obstruction 
$$
O
\in 
H^1\left(M, \mathcal{H}om_{\mathcal{O}_\hbar}\left(E_{\ell-k-1}, F_{\ell-k-1}\right) \right)
$$
 to existence
of such a morphism. It may happen that $O$ does not
vanish but adjusting the choice $E_\ell, F_\ell$ (as extensions of 
corresponding order $k$ quantizations) leads to a vanishing obstruction 
class and hence existence of the morphism $\varphi_\ell$ of 
adjusted order $\ell$ quantizations, lifting
$\varphi_k$. We are using a more standard approach of spectral sequences
since there can be no formal Darboux lemma for $\varphi_k$:
morphisms between free $\mathcal{O}_k$-modules cannot be reduced to 
a single local form.

\bigskip
\noindent
\textbf{Acknowledgements.} We thank A. Gorokhovsky for useful discussions.

\bigskip\bigskip

\section{Coefficient sheaves for obstructions}

In this section we assume that an order $k$ quantization $E_k$ is
given and we will define a sheaf giving a cohomology group that contains
the obstruction to extending $E_k$ to $E_{\ell}$ for $\ell > k$. This obstruction 
sheaf depends on the choice of $E_k$ although it has a filtration by 
subsheaves with associated graded quotients $End(E)$. Eventually, 
this leads to a
spectral sequence in which the second page is formed by cohomology of
$End(E)$ but differentials do depend on the choice of $E_k$. We consider
two cases:
\medskip
\noindent
\textbf{$k+1 \leq \ell \leq 2k+1$} and \textbf{$ \ell > 2k+1$}. 

\medskip
\noindent
For the case \textbf{$k+1 \leq \ell \leq 2k+1$}, define the 
sheaf $End_{\mathcal{O}_\hbar}\left(E_k\right)$ of endomorphisms 
of $E_k$ over $\mathcal{O}_\hbar$ and then set
$$
End\left(E\right)_{\left(k, \ell\right]} = \hbar^{k+1} \cdot \big[End_{\mathcal{O}_h}\left(E_k\right)/ \hbar^{s+1}
 End_{\mathcal{O}_h}\left( E_k \right)\big] \simeq \hbar^{k+1}  \cdot
End_{\mathcal{O}_\hbar} \left( E_s \right)
$$
where $s = \ell - k - 1 \in  \{0, \ldots, k\}$ and 
$E_s := E_k/\hbar^{s+1} E_k$. 
Here, and in similar notation below $\left( \hbar^{k+1} \cdot \ldots \right)$ does not 
denote the action of $\hbar^{k+1}$, it simply a placeholder to remind
that eventually $\hbar^{k+1} \cdot 
End_{\mathcal{O}_\hbar} \left( E_s \right)$ will be identified with 
a subsheaf of $End_{\mathcal{O}_\hbar} \left( E_{s+k+1=\ell} \right)$
formed by all multiplies of $\hbar^{k+1}$, while as a sheaf it is isomorphic to 
$End_{\mathcal{O}_\hbar} \left( E_s \right)$. 

The quotient
defined above is sheaf in the Zariski (or complex analytic) topology.
Note that it depends on $E_k$ and $s$ only, although there is $\ell$ in the
notation. The factor $\hbar^{k+1}$ in front of $End$ will be clarified
later, but it particular it implies the zero commutator bracket. 
Under the above restrictions on $\ell$ the main result of our paper 
can be stated as follows
\begin{theorem} For $k+1 \leq \ell \leq 2k+1$,
given an order $k$ quantization $E_k$ of a locally free sheaf 
$E$, there is a obstruction class in $H^2\left(M, End\left(E\right)_{\left(k, \ell\right]}\right)$
which vanishes if and only if $E_k$ admits an extension to an 
order $\ell$ quantization. If this is indeed the case, the set
of isomorphism classes of such extensions is a torsor over
$H^1\left(M, End\left(E\right)_{\left(k, \ell\right]}\right)$.
\end{theorem}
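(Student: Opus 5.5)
The plan is to run the standard Čech/gluing deformation argument, using that locally every order $k$ quantization extends, and that in the range $\ell \le 2k+1$ the relevant gauge group is abelian.

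\emph{Local existence and uniqueness.} Locally on $M$ (on an open cover $\{U_i\}$ trivializing $E$), $E_k|_{U_i}$ is isomorphic to $(\mathcal{O}_\hbar/\hbar^{k+1})^{\oplus e}$, since it is locally free. So it extends to $E_\ell^{(i)} := (\mathcal{O}_\hbar/\hbar^{\ell+1})^{\oplus e}$. Any two local extensions over a small enough open set are isomorphic, because both are free and a basis lifts. Next we describe the automorphisms of an extension $E_\ell$ that restrict to the identity on $E_k$. Such an automorphism is $1+\phi$, where $\phi$ is an $\mathcal{O}_\hbar$-linear endomorphism of $E_\ell$ with image in $\hbar^{k+1}E_\ell$. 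Since $\hbar$ is central and $\hbar^{k+1}E_\ell \simeq E_\ell/\hbar^{s+1}E_\ell = E_s$, such $\phi$ correspond to elements of $\hbar^{k+1}\cdot End_{\mathcal{O}_\hbar}(E_s) = End(E)_{(k,\ell]}$. This sheaf depends only on $E_s = E_k/\hbar^{s+1}$, hence is intrinsic to $E_k$. Here we use $s\le k$, which is exactly $\ell\le 2k+1$. Composition gives $(1+\phi)(1+\psi)=1+\phi+\psi+\phi\psi$. Because $\phi\psi$ lands in $\hbar^{2k+2}E_\ell=0$, the group is abelian and isomorphic to the additive sheaf $End(E)_{(k,\ell]}$.

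\emph{Obstruction class.} Choose local extensions $E^{(i)}_\ell$ and isomorphisms $g_{ij}: E^{(j)}_\ell|_{U_{ij}} \to E^{(i)}_\ell|_{U_{ij}}$ lifting the identity of $E_k$, refining the cover if needed. On triple overlaps, $g_{ij}g_{jk}g_{ki}=1+c_{ijk}$. Using the identifications above, $c_{ijk}$ is a section of $End(E)_{(k,\ell]}$. Here one must check that the identification of the kernel group is independent of which $E^{(i)}_\ell$ we use; it is, because conjugation acts trivially on it. That conjugation is trivial since $\phi$ is $\mathcal{O}_\hbar$-linear and conjugating by something congruent to the identity mod $\hbar$ changes $\phi$ only by terms in $\hbar^{k+2}$ times a commutator, which vanish after...

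This is the step I expect to need the most care. Conjugation by $g$ acts on $End(E_s)$ through the induced automorphism of $E_s$, and that induced automorphism is the identity because $g$ lifts $\mathrm{id}_{E_k}$ and $s\le k$. Then $c$ is a Čech 2-cocycle. Changing $g_{ij}$ by $1+b_{ij}$ changes $c$ by $\check\delta b$, and changing the local models changes nothing up to coboundary. This gives a well-defined class in $\check H^2$. Passing to the limit over covers, and using either the Čech-to-derived comparison or the paper's Harish-Chandra framework, identifies it with a class in $H^2(M, End(E)_{(k,\ell]})$. If the class vanishes, write $c=\check\delta b$ and replace $g_{ij}$ by $g_{ij}(1-b_{ij})$; the new maps satisfy the cocycle condition and glue the local pieces to a global $E_\ell$. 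Conversely, if a global $E_\ell$ exists, take $E^{(i)}_\ell = E_\ell|_{U_i}$ and $g_{ij}=1$, so $c=0$.

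\emph{Torsor statement.} Fix one extension $E_\ell$. Any other extension $E'_\ell$ is locally isomorphic to $E_\ell$ compatibly with $E_k$, via maps $h_i$. The differences $h_i h_j^{-1}$ form a 1-cocycle in $End(E)_{(k,\ell]}$, well-defined up to coboundary. Conversely, twisting the gluing of $E_\ell$ by a 1-cocycle produces a new extension. Isomorphisms of extensions compatible with the identification mod $\hbar^{k+1}$ correspond exactly to cohomologous cocycles. Abelianness makes this action free and transitive, so the set of extensions is an $H^1(M, End(E)_{(k,\ell]})$-torsor. In the algebraic setting one should use Zariski Čech cohomology for a quasi-coherent-type sheaf. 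Alternatively, to avoid comparison issues, one can phrase everything via the gerbe/torsor of local extensions, which is banded by the abelian sheaf $End(E)_{(k,\ell]}$, and invoke the standard classification of abelian gerbes by $H^2$ and of torsors by $H^1$.
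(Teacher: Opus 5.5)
Your argument is correct, but it takes a genuinely different route from the paper's proof.

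\textbf{How the paper argues.} The paper never glues local extensions directly. It encodes $E_k$ in the transitive Harish-Chandra torsor $P(k)$ over $(\mathbf{F}_k,\mathfrak{g}_k)$ and takes the relative cocycle $C_\ell$ of the abelian extension $\mathfrak{h}(k,\ell)\to\mathfrak{g}_\ell\to\mathfrak{g}_k$. It then applies the Bezrukavnikov--Kaledin lifting criterion (Proposition~\ref{lift}): $P(k)$ lifts if and only if $GF(C_\ell)$ vanishes in $H^2_{DR}(M,\mathfrak{h}(k,\ell)_{P(k)})$, and the lifts form an $H^1_{DR}$-torsor. Theorem~\ref{jetsandtorsors} converts this de Rham cohomology into sheaf cohomology of $End(E)_{(k,\ell]}$, and converts torsor lifts into order $\ell$ quantizations.

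\textbf{How you argue.} You classify the gerbe of local extensions directly, via \v{C}ech cocycles. Your key point is that automorphisms of an extension restricting to the identity on $E_k$ form the abelian sheaf $End(E)_{(k,\ell]}$. This identification is canonical across objects, because conjugation acts through the induced automorphism of $E_s$, which is trivial since $s\le k$. That step is exactly where $\ell\le 2k+1$ enters, and it plays the role that abelianness of $\mathfrak{h}(k,\ell)$ plays in the paper.

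\textbf{What each route buys.} Your route is more elementary and self-contained. It needs neither the formal Darboux theorem, nor representability of $P(k)$, nor the de Rham comparison. It also never uses the symplectic form beyond the existence of $\mathcal{O}_\hbar$. The paper's route gives a universal description of the obstruction as the Gelfand--Fuchs image of a single class in $H^2(\mathfrak{g}_k,\mathbf{K};\mathfrak{h}(k,\ell))$. That class is computable from the Fedosov curvature $\hbar^{-k-1}(\nabla^{tot}_k)^2$ of Section 5. The same framework also yields the abelianized obstruction of Theorem 2 for $\ell\ge 2k+2$, where your gerbe is no longer banded by an abelian sheaf determined by $E_k$.

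\textbf{Small points to tidy.}
\begin{itemize}
\item The identification $\hbar^{k+1}E_\ell\simeq E_s$ uses that $\mathcal{O}_\hbar$ has no $\hbar$-torsion. That is what makes the kernel of $\hbar^{k+1}$ on $\mathcal{O}_\hbar/\hbar^{\ell+1}$ equal to $\hbar^{s+1}\mathcal{O}_\hbar/\hbar^{\ell+1}$, and you should say so.
\item Delete your first, unfinished sentence about conjugation; the second argument is the right one.
\item For the comparison with derived-functor cohomology, note that $End(E)_{(k,\ell]}$ is not an $\mathcal{O}_M$-module. It is, however, filtered by powers of $\hbar$ with graded pieces $End(E)$, so it is acyclic on affine (resp.\ Stein) opens and Leray's theorem applies.
\item Take the $U_i$ affine (on separated $M$) with $E_k|_{U_i}$ free. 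Then the $g_{ij}$ exist on all of $U_{ij}$ by lifting transition matrices, and no refinement of the overlaps is needed.
\end{itemize}
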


\medskip
\noindent
Next, consider the case \textbf{$\ell \ge 2k+2$}. 
In Section \ref{alltheaction}  we define a sheaf
of vector spaces 
$End_{\mathcal{O}_\hbar}\left(E_k\right)^+$ which fits into a short exact sequence
\[
0 \rightarrow \hbar^{2k+2}\cdot \mathcal{O}_M  \rightarrow End_{\mathcal{O}_\hbar} \left( E_k \right)^{+} \rightarrow  End 
\left(E \right)_{\left(k, 2k+1\right]}  \rightarrow 0
\]
Existence of a lift of $E_k$ to an order $\ell$ quantization 
 implies that the obstruction class in $H^2 \left( M , End_{\mathcal{O}_\hbar} \left( E_k \right)^{+} \right)$ vanishes.  

\begin{theorem} For $\ell \geq 2k+2$, 
given an order $k$ quantization $E_k$ of a locally free sheaf 
$E$, there is an obstruction class in $H^2 \left(M, End_{\mathcal{O}_\hbar} \left( E_k \right)^{+}\right)$
which vanishes if  $E_k$ admits an extension to an 
order $\ell$ quantization. 
\end{theorem}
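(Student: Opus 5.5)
The plan is to go through the formal geometry set-up of Sections 3–4 rather than Čech cocycles, because for $\ell \ge 2k+2$ the kernel of $G_\ell \to G_k$ is no longer abelian. Here $G_m$ denotes the pro-algebraic group of automorphisms of the pair $(\mathcal{D}, (\mathcal{D}/\hbar^{m+1}\mathcal{D})^{\oplus e})$, and $\mathfrak{g}_m$ its Lie algebra of derivations of the pair. By the formal Darboux lemma (Theorem \ref{completion}), $E_k$ gives a transitive Harish-Chandra torsor $P_k$ over $(\mathfrak{g}_k, G_k)$. An order $\ell$ extension $E_\ell$ gives a torsor $P_\ell$ over $(\mathfrak{g}_\ell, G_\ell)$, together with an identification $P_\ell \times_{G_\ell} G_k \simeq P_k$. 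So the theorem will follow once I produce an intermediate Harish-Chandra pair $(\mathfrak{g}^+, G^+)$ with these properties:
\begin{itemize}
\item the map $G_\ell \to G_k$ factors as $G_\ell \to G^+ \to G_k$, and similarly for the Lie algebras;
\item $G^+ \to G_k$ is surjective with abelian kernel $K$;
\item the $G_k$-module $K$ localizes, via $P_k$, to the flat bundle whose sheaf of flat sections is $End_{\mathcal{O}_\hbar}(E_k)^+$.
\end{itemize}
The obstruction class is then defined as the Bezrukavnikov–Kaledin \cite{BezKal03} obstruction to lifting $P_k$ along $G^+\to G_k$. If $P_\ell$ exists, then $P_\ell\times_{G_\ell}G^+$ is such a lift, so the class vanishes. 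This is exactly the one-directional statement of Theorem 2.

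To construct $G^+$, I would take the image of $G_\ell$ in $G_{2k+1}$ and quotient by the normal subgroup generated by commutators in the kernel of $G_{2k+1}\to G_k$. An element of that kernel acts on the module by $1+\hbar^{k+1}a$. Commutators of two such elements lie in $1+\hbar^{2k+2}(\cdots)$, and on the module $(\mathcal{D}/\hbar^{2k+2})^{\oplus e}$ they act trivially. They survive only through their action on $\mathcal{D}$ itself, and that action is by inner derivations $\frac{1}{\hbar}[\hbar^{2k+2} f,\,\cdot\,]$. Modulo the relevant filtration, these are determined by a function $f$, up to the residual central ambiguity. This produces the abelian kernel as an extension
\[
0\to \hbar^{2k+2}\mathcal{O}\to K \to \hbar^{k+1}\,\mathfrak{gl}_e(\mathcal{D}/\hbar^{k+1}) \to 0 ,
\]
which, after localization, gives the short exact sequence defining $End_{\mathcal{O}_\hbar}(E_k)^+$. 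I would then check three things:
\begin{itemize}
\item $K$ is abelian;
\item $K$ is stable under $G_k$;
\item the Harish-Chandra conditions hold (the adjoint action is compatible and $\mathfrak{g}^+\to\mathfrak{g}_k$ is surjective).
\end{itemize}
I would also check that the associated flat bundle $P_k\times_{G_k}K$ has de Rham complex resolving $End_{\mathcal{O}_\hbar}(E_k)^+$, so that the Bezrukavnikov–Kaledin class, which lies in relative Lie algebra cohomology with coefficients in $K$, identifies with an element of $H^2(M, End_{\mathcal{O}_\hbar}(E_k)^+)$. That identification is by the same localization argument that is used for Theorem 1.

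The main obstacle is the algebra in the second paragraph: showing that the commutator quotient really is an abelian extension of $G_k$ with kernel exactly as above. In particular, I need the $\hbar^{2k+2}$-level commutators to contribute only a copy of $\mathcal{O}$, and not something larger depending on the module. My first attempt will be a direct computation in $\mathfrak{g}_{2k+1}$. I would write the derivations of the pair as pairs $(\hbar^{-1}\mathrm{ad}(f),\, A)$ with $A\in \mathfrak{gl}_e(\mathcal{D}/\hbar^{2k+2})$ subject to compatibility. I would then compute brackets of elements in the $\hbar^{k+1}$-part and verify that they land in the $\hbar^{2k+2}$ scalar part. After that, I would exponentiate to the group using the pro-unipotence of the relevant kernels.
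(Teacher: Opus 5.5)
\textbf{There is a genuine gap: your intermediate group does not have the kernel you claim.}

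Your overall architecture matches the paper's. The paper also builds an abelian Harish-Chandra extension of $(\mathbf{F}_k,\mathfrak{g}_k)$ through which $(\mathbf{F}_\ell,\mathfrak{g}_\ell)$ maps, namely $\mathbf{F}_\ell/H'_\ell$ with Lie part $\mathfrak{g}'_\ell$. It notes that $P(\ell)/H'_\ell$ is a lift of $P(k)$ and concludes by Proposition~\ref{lift}. The problem is your construction of $G^+$.

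In $G_{2k+1}$, the kernel $N$ of $G_{2k+1}\to G_k$ consists of pairs $(\mathrm{id}_{\mathcal{D}},\,1+\hbar^{k+1}a)$. These fix $\mathcal{D}$ pointwise by definition. Their commutators are $1+\hbar^{2k+2}(\cdots)=1$ on $\mathcal{M}_{2k+1}=(\mathcal{D}/\hbar^{2k+2}\mathcal{D})^{\oplus e}$. So $N$ is already abelian; this is just the case $\ell=2k+1$ of Theorem~1. Quotienting by commutators of $N$ changes nothing, so $K=\mathfrak{h}(k,2k+1)$ with no $\hbar^{2k+2}$-term. The resulting class lies in $H^2(M,End(E)_{(k,2k+1]})$, not in $H^2(M,End_{\mathcal{O}_\hbar}(E_k)^+)$.

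The mechanism you propose, in which commutators ``survive through their action on $\mathcal{D}$'' as inner derivations $\frac{1}{\hbar}[\hbar^{2k+2}f,\cdot\,]$, does not exist. Kernel elements and their commutators act trivially on $\mathcal{D}$. Moreover, the map $\mathfrak{g}_m=Der(\mathcal{D})\ltimes\mathfrak{gl}(e,\mathcal{D}/\hbar^{m+1}\mathcal{D})\to\mathfrak{g}_k$ is the identity on the $Der(\mathcal{D})$ factor. Hence any element with a nonzero derivation component maps nontrivially and can never lie in the kernel of $G^+\to G_k$. Your planned bracket computation in $\mathfrak{g}_{2k+1}$ would simply return zero.

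\textbf{The fix: go one level up and work in the module-endomorphism part.}

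Take $N=\ker(G_{2k+2}\to G_k)$, or the same with $G_\ell$ for any $\ell\ge 2k+2$, and set $G^+=G_{2k+2}/\overline{[N,N]}$. This is essentially the paper's $\mathbf{F}_\ell/H'_\ell$.

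Now $[\hbar^{k+1}A,\hbar^{k+1}B]=\hbar^{2k+2}[A,B]$ is nonzero in $\mathfrak{gl}(e,\mathcal{D}/\hbar^{2k+3}\mathcal{D})$. The commutator $[\mathfrak{gl}(e,\mathcal{D}),\mathfrak{gl}(e,\mathcal{D})]$ is exactly the kernel of $\mathfrak{gl}(e,\mathcal{D})\to\mathfrak{gl}(e,\mathcal{A})\xrightarrow{\mathrm{tr}}\mathcal{A}$, and in particular contains $\hbar\,\mathfrak{gl}(e,\mathcal{D})$. So the quotient of $\mathfrak{h}(k,\ell)$ by its commutators is $\mathfrak{h}^+(k)$. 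The extra piece $\hbar^{2k+2}\cdot\mathcal{A}$ is the trace mod $\hbar$ of the order-$\hbar^{2k+2}$ coefficient of a module endomorphism, not a derivation of $\mathcal{D}$. This computation also shows the quotient is independent of $\ell\ge 2k+2$, which gives the factorization $G_\ell\to G^+\to G_k$ you need.

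With this $G^+$, and noting that the extension splits over $\mathbf{K}=Sp(2n)\times GL(e)$ as Proposition~\ref{lift} requires, the rest of your argument goes through. The identification of the coefficient sheaf then holds by definition, since $End_{\mathcal{O}_\hbar}(E_k)^+$ is defined as the flat sections of the bundle associated to $P(k)$ and $\mathfrak{h}^+(k)$.
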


\section{Lie algebras, extensions and cocycles}

\subsection{Extension of Lie algebras}

Consider the formal
Weyl algebra, a quotient of the algebra of non-commutative power series: 
\[
\mathcal{D}=\kappa\left\langle\langle x_{1},\ldots,x_{n},y_{1},\ldots,y_{n},\hbar\right\rangle\rangle /\left\{ y_{j}x_{k}-x_{k}y_{j}=\delta_{jk}\hbar\right\} 
\]
Its quotient $\mathcal{A}=\mathcal{D}/\hbar\mathcal{D}$ is isomorphic
to the algebra of power series $\kappa\left\llbracket x_{1},\ldots,x_{n},y_{1},\ldots,y_{n}\right\rrbracket $. Much of our discussion below will be related to the $\mathcal{D}$-module $\mathcal{M}_k = \left(\mathcal{D}/\hbar^{k+1} \mathcal{D} \right)^e$ (we fix the value of $e$ throughout the paper, 
eventually equal to the rank of the bundle $E$ to 
be quantized, and do not reflect
it in the notation), and to the Lie algebra $\mathfrak{g}_k$ of infinitesimal automorphisms of the 
pair $\left(\mathcal{D}, \mathcal{M}_k\right)$, i.e. pairs of linear maps compatible with natural filtrations 
$$
\psi: \mathcal{D} \to \mathcal{D}, \qquad \varphi: \mathcal{M}_k \to \mathcal{M}_k
$$
where $\psi$ is a Lie derivation and $\varphi$ satisfies $\varphi\left(xm\right) = \psi\left(x\right) m + x 
\varphi\left(m\right)$. It follows from the definitions that 
$$
\mathfrak{g}_{k} :=  Der\left(\mathcal{D}\right) \ltimes  \mathfrak{gl}\left(e, \mathcal{D}/\hbar^{k+1}
\mathcal{D}\right)  
$$
where $Der\left(\mathcal{D}\right)$ is the algebra of Lie derivations compatible with filtrations; 
which
acts on $\mathfrak{gl} \left(e, \mathcal{D}/\hbar^{k+1} \mathcal{D}\right) $ through its action on 
$\mathcal{D}$.

\bigskip
\noindent
Assuming $k+1 \leq \ell \leq 2k+1$ we have a Lie algebra extension with 
\textit{abelian} kernel:

\begin{equation}
0\to\mathfrak{h}\left(k,\ell\right)\to\mathfrak{g}_{\ell}\to\mathfrak{g}_{k}\text{\ensuremath{\to0}}
\label{eq:Lie_alg_SES}
\end{equation}
where, setting $\mathfrak{h}_k = 0 \ltimes \hbar^{k+1} \mathfrak{gl} \left(e, \mathcal{D} \right)$
we define $\mathfrak{h} \left(k, \ell \right)
: = \mathfrak{h}_k/\mathfrak{h}_\ell$.
Note that this has a non-trivial
structure of a module over $\mathfrak{g}_k$. 
 In the case when $\ell \geq 2k+2$ consider the quotient $\mathfrak{h}^+\left(k,\ell\right)$ of the above extension by the commutator subalgebra $[\mathfrak{h}\left(k,\ell\right) , \mathfrak{h}\left(k,\ell\right)]$ of 
$\mathfrak{h}\left(k,\ell\right)$ which is a Lie ideal in $\mathfrak{g}_{\ell}$. This 
leads to an abelian extension
\begin{equation}
0\to\mathfrak{h}^+\left(k,\ell\right)\to\mathfrak{g}'_{\ell}\to\mathfrak{g}_{k}\text{\ensuremath{\to0}}
\label{eq:Lie_alg_SES-prime}
\end{equation}
It is a straightforward computation that the commutator of 
$\mathfrak{gl}(e, \mathcal{D})$ is the kernel of the composition 
$$
\mathfrak{gl} \left(e, \mathcal{D} \right) \to \mathfrak{gl} \left(e, \mathcal{A} \right) 
\to \mathcal{A}
$$
where $\mathcal{A}$ is the abelian quotient introduced above and 
the last arrow is the trace map. In particular, the commutator contains
$\hbar \mathfrak{gl}\left(e, \mathcal{D}\right)$. Hence $\mathfrak{h}^+ \left(k, \ell \right)$ 
is actually independent on the
choice of $\ell \geq 2k+2$ and we denote it
simply by $\mathfrak{h}^+ \left(k\right)$. There exists
a nonsplit short exact sequence of $\mathfrak{g}_k$-modules: 
\[
0 \rightarrow \hbar^{2k+2} \cdot \mathcal{A}  
\rightarrow \mathfrak{h}^+ \left(k \right) \rightarrow  
\hbar^{k+1} \mathfrak{gl} \left( e , \mathcal{D} \right)/ \hbar^{2k+2} \mathfrak{gl} \left( e , \mathcal{D} \right)  \rightarrow 0
\]
To see why this is the case, first observe that the Lie algebra $ 
\mathfrak{g}_{k+1}$ acts on $\mathfrak{h} \left(k, 2k+2 \right)$.
The induced action on $\mathfrak{h}^+ \left(k \right)$ (= the quotient
by commutators in $\mathfrak{h} \left(k, 2k+2 \right)$) is trivial
on the kernel of the surjection $
\mathfrak{g}_{k+1} \to 
\mathfrak{g}_{k}$. In fact, this kernel is isomorphic to the 
abelian Lie algebra on  the vector space
$\hbar^{k+1} \cdot \mathfrak{gl} \left(e, \mathcal{A} \right)$ and the Lie 
action by this vector space takes values in the subspace
spanned by the commutators of $\mathfrak{h} \left(k, 2k+2 \right)$.

The following theorem follows immediately from the fact that the
extensions \eqref{eq:Lie_alg_SES} and \eqref{eq:Lie_alg_SES-prime} admit vector space splittings
$$
\mathfrak{g}_\ell \simeq \mathfrak{g}_k \oplus \mathfrak{h}\left(k, \ell\right), \qquad
\mathfrak{g}'_{2k+2} \simeq \mathfrak{g}_k \oplus \mathfrak{h}^+(k)
$$which are 
invariant with respect to the reductive subalgebra 
$\mathfrak{sp}\left(2\right) \oplus \mathfrak{gl}\left(e\right)$ in $\mathfrak{g}_k$. By the standard construction
the splittings define Lie algebra cocycles
$$
C_\ell: \Lambda^2 \mathfrak{g}_k \to \mathfrak{h}\left(k, \ell\right), \qquad 
C': \Lambda^2 \mathfrak{g}_k \to \mathfrak{h}^+ \left(k\right)
$$

\begin{theorem}
\label{thm:class_C_in_ker_nu}
For $k+1 \leq \ell \leq 2k+1$, the extension 
cocycle $C_\ell$ is relative with respect to the 
subalgebra $\mathfrak{sp}\left(2n, \kappa\right)\oplus\mathfrak{gl}\left(e, \kappa \right)$ and it gives an element in the kernel of 
$$
H^{2}\left(\mathfrak{g}_{k}, 
\mathfrak{sp}\left(2n, \kappa\right)\oplus\mathfrak{gl}\left(e, \kappa\right);\mathfrak{h}(k, \ell)\right)\to H^{2}\left(\mathfrak{g}_{\ell}, \mathfrak{sp}\left(2n, \kappa\right)\oplus \mathfrak{gl}\left(e, \kappa\right);\mathfrak{h}(k, \ell) \right)
$$
For $\ell \geq 2 k + 2$, a similar statement holds for the cocycle $C'$ with coefficients
in 
$\mathfrak{h}^+\left(k\right)$.
\end{theorem}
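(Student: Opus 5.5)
We use the standard description of an abelian extension by its cocycle. Write $K = \mathfrak{sp}(2n,\kappa)\oplus\mathfrak{gl}(e,\kappa)$ and let $\sigma:\mathfrak{g}_k\to\mathfrak{g}_\ell$ be the $K$-equivariant linear splitting fixed above. Then
$$
C_\ell(a,b) = [\sigma a,\sigma b] - \sigma[a,b] \in \mathfrak{h}(k,\ell),
$$
where $\mathfrak{h}(k,\ell)$ is abelian for $\ell\le 2k+1$. Indeed, $[\hbar^{k+1}\mathfrak{gl},\hbar^{k+1}\mathfrak{gl}]\subset\hbar^{2k+2}\mathfrak{gl}$, which lies in $\hbar^{\ell+1}$ and hence dies in the quotient. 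For $\ell \ge 2k+2$ we replace $\mathfrak{g}_\ell$ by $\mathfrak{g}'_{2k+2}$ and the kernel by $\mathfrak{h}^+(k)$. The argument has three steps: $C_\ell$ is a cocycle, it is $K$-relative, and it dies after pullback to $\mathfrak{g}_\ell$.

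\textbf{Step 1 (cocycle).} The Jacobi identity in $\mathfrak{g}_\ell$, expanded with $[\sigma a,\sigma b]=\sigma[a,b]+C_\ell(a,b)$, gives $dC_\ell=0$. Here $\mathfrak{g}_k$ acts on the abelian ideal through $a\cdot h=[\sigma a,h]$, and this action is well defined because the ideal is abelian. This is routine.

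\textbf{Step 2 (relativity).} We must show $\iota_x C_\ell = 0$ for $x\in K$ and that $C_\ell$ is $K$-invariant. Invariance follows from the $K$-equivariance of $\sigma$. For the vanishing we need $\sigma$ restricted to $K$ to be a Lie homomorphism that intertwines the adjoint actions. We take $K\subset\mathfrak{g}_k$ as the standard copy: the quadratic Hamiltonian derivations together with the constant matrices in $\mathfrak{gl}(e,\kappa)$. We choose $\sigma$ so that $K$ goes to the same standard copy inside $\mathfrak{g}_\ell$. The splittings in question are the natural ones, $\mathfrak{gl}(e,\mathcal{D}/\hbar^{k+1})\hookrightarrow\mathfrak{gl}(e,\mathcal{D}/\hbar^{\ell+1})$ via a $K$-equivariant complement (for example, coefficients of $\hbar^j$, $j\le k$, in a PBW/Weyl-symmetrized basis), with $Der(\mathcal{D})$ mapping identically. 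Then for $x\in K$ and any $b$ we have $[\sigma x,\sigma b]=\sigma[x,b]$, which is exactly $K$-equivariance of $\sigma$ once $\sigma|_K$ is the inclusion. Hence $C_\ell(x,b)=0$. This check is the step that needs care: $K$ acts on $\hbar$-degrees trivially and on polynomial degrees in $x,y$ by linear substitution, so the complement spanned by $\hbar^j$-coefficients is $K$-stable, and $\mathfrak{sp}(2n)$ is reductive, so equivariant complements exist in general.

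\textbf{Step 3 (kernel).} Pull $C_\ell$ back along the projection $\pi:\mathfrak{g}_\ell\to\mathfrak{g}_k$. We exhibit a $K$-relative $1$-cochain $\beta:\mathfrak{g}_\ell\to\mathfrak{h}(k,\ell)$ with $d\beta=\pi^*C_\ell$. Take $\beta = \mathrm{id}-\sigma\pi$, the projection onto the kernel along the image of $\sigma$. Then
$$
d\beta(u,v) = u\cdot\beta(v) - v\cdot\beta(u) - \beta([u,v]).
$$
Writing $u=\sigma a+h$ and $v=\sigma b+h'$ and using that $\mathfrak{h}$ is abelian, we get $[u,v]=\sigma[a,b]+C_\ell(a,b)+[\sigma a,h']-[\sigma b,h]$. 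So $\beta([u,v]) = C_\ell(a,b)+a\cdot h'-b\cdot h$, and $d\beta(u,v)=\pm\pi^*C_\ell(u,v)$, with the sign fixed by convention. Relativity of $\beta$ holds because $\beta$ vanishes on $\sigma(K)\supset K$ and is $K$-equivariant.

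For $\ell\ge 2k+2$ the identical argument runs in $\mathfrak{g}'_{2k+2}$. The only extra input is that $\mathfrak{h}^+(k)$ is abelian as an ideal of $\mathfrak{g}'$, which holds by construction as the quotient by commutators, and that the action of $\mathfrak{g}_{k+1}$ factors through $\mathfrak{g}_k$, as noted above. The main obstacle is Step 2: making sure the chosen splittings really restrict to Lie homomorphisms on $K$. In the $\mathfrak{h}^+$ case, $\mathfrak{sp}$ must preserve the Weyl-symmetrized complement, including the trace component $\hbar^{2k+2}\mathcal{A}$.
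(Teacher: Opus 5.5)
Your proposal is correct and takes essentially the paper's route: the paper only invokes $\mathbf{K}$-invariant vector space splittings and the standard extension-cocycle construction. You fill in what it leaves implicit, namely an explicit Weyl-symmetrized splitting with $\sigma|_K$ equal to the inclusion (which is what actually gives $\iota_x C_\ell=0$) and the explicit primitive $\sigma\pi-\mathrm{id}$ of $\pi^*C_\ell$ on $\mathfrak{g}_\ell$; the $\mathfrak{h}^+(k)$ concern you flag is harmless, since that splitting is the same lift composed with the $\mathbf{K}$-equivariant quotient by $[\mathfrak{h}(k,2k+2),\mathfrak{h}(k,2k+2)]$.
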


\subsection{Harish-Chandra pairs and cohomology}
Below we would like to work with objects slightly different from 
usual Lie algebras, which we now proceed to define.

\bigskip
\noindent
\textbf{Definition.} A Harish-Chandra pair $\left(\mathbf{F}, 
\mathfrak{g} \right)$ consists of a  proalgebraic group $\mathbf{F}$ with a Lie algebra 
$\mathfrak{f}$ and an additional 
Lie algebra $\mathfrak{g}$ with an embedding 
$\mathfrak{f} \subset \mathfrak{g}$. 
It is further assumed that $\mathfrak{g}$ has a $\mathbf{F}$-action 
compatible with the adjoint action of $\mathbf{F}$ on $\mathfrak{f}$. 

A module $V$ over such a pair is a module over $\mathbf{F}$ with 
a Lie action of $\mathfrak{g}$ on $V$ which is compatible with 
the tangential action of $\mathfrak{f}$ on $V$, and the action 
of $\mathbf{F}$ on $\mathfrak{g}$. 

\bigskip

In our examples we will always have a splitting 
$\mathbf{F} \simeq \mathbf{U} \rtimes \mathbf{K}$ with prounipotent 
$\mathbf{U}$ and $\mathbf{K}$ is a finite dimensional algebraic 
group acting by automorphisms on $\mathbf{U}$. 
In our case $\mathbf{K}$ will be product of a symplectic and a general linear
 group, in particularly reductive.
We work with 
 a complex $C^\bullet(\mathfrak{g}, \mathbf{K}; V)$ of relative Lie cochains
 $\Lambda^u \mathfrak{g} \to V$ which are invariant with respect 
 to the subgroup $\mathbf{K}$.  When $\mathbf{K}$ is reductive and connected, 
 $\mathbf{K}$-invariant cochains 
 can be identified with $\mathfrak{k}$-invariant
 cochains, for $\mathfrak{k} = Lie \left(\mathbf{K} \right)$.

In this setting the cohomology of
Lie Algebra $\mathfrak{g}$, relative to subgroup group $\mathbf{K}
\subset \mathbf{F}$, with coefficients in the left $\left(\mathbf{F}, \mathfrak{g} \right)$-module $V$, can be defined via the invariant version of 
the Chevalley\textendash Eilenberg
complex: 
\begin{equation}
C^{\ell}\left(\mathfrak{g},\mathbf{K};V\right):=Hom_{\kappa}\left(\wedge^{\ell}(\mathfrak{g}/\mathfrak{k}),V\right)^\mathbf{K}
\end{equation}
For $\omega\in C^{\ell}\left(\mathfrak{g},\mathbf{K};V\right)$ and 
$a_{j}\in\mathfrak{g}$
 the Chevalley\textendash Eilenberg differential
$\delta_{Lie}$ satisfies
\begin{equation}
\begin{array}{ccccc}
 & \left(\delta_{Lie}\omega\right)\left(a_{1},\cdots,a_{\ell+1}\right) & = & \stackrel[j=1]{\ell+1}{\sum}\left(-1\right)^{j+1}a_{j}\omega\left(a_{1},\cdots,\widehat{a_{j}},\cdots,a_{\ell+1}\right)+\\
 &  &  & \stackrel[j=1]{\ell+1}{\sum}\stackrel[i=1]{j-1}{\sum}\left(-1\right)^{j+i}\omega\left(\left[a_{i},a_{j}\right],a_{1},\cdots,\widehat{a_{i}},\cdots,\widehat{a_{j}},\cdots,a_{\ell+1}\right).
\end{array}\label{eq:diff_LA_CE}
\end{equation}
Note that
$i_x \omega = 0$ and $i_x \circ \delta_{Lie} \omega = 0$ whenever $x$ is a vector tangent to $\mathbf{K}$.
 The cohomology groups are defined in the usual way
\begin{equation}
H^{\ell}\left(\mathfrak{g},\mathbf{K},V\right)=\frac{\ker\left(\delta_{Lie}:C^{\ell}\rightarrow C^{\ell+1}\right)}{Im\left(\delta_{Lie}:C^{\ell-1}\rightarrow C^{\ell}\right)}
\end{equation}

\bigskip
\noindent 
The usual relation between extensions and second cohomology classes
generalizes to this case. 
We consider an abelian extension of Harish-Chandra pairs
\begin{equation}
\label{extensionHC}
1 \to \left(V, V \right) \to \left(\tilde{\mathbf{F}}, \tilde{\mathfrak{g}}\right) 
\to \left(\mathbf{F}, \mathfrak{g}\right) \to 1
\end{equation}
where the kernel $V$ is given by a Harish-Chandra module over
$\left(\mathbf{F}, \mathfrak{g} \right)$ and we view it as abelian algebraic 
group with the additive structure. We assume that the extension 
splits over a subgroup $\mathbf{K} \subset \mathbf{F}$.

\begin{theorem}
  Any abelian extension as above, splitting over a subgroup $\mathbf{K}$, defines
  a relative cohomology class in the kernel of 
$$
H^{2}\left(\mathfrak{g}, 
\mathbf{K};V\right)\to H^{2}\left(\tilde{\mathfrak{g}}, 
\mathbf{K}; V \right)
$$
\end{theorem}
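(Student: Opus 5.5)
We construct the class from a splitting and then check that it dies after pulling back to $\tilde{\mathfrak{g}}$.

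\emph{Step 1: a $\mathbf{K}$-equivariant linear splitting.} The extension splits over $\mathbf{K}$, so $\tilde{\mathbf{F}}$ contains a copy of $\mathbf{K}$ mapping isomorphically onto $\mathbf{K}\subset\mathbf{F}$. Hence $\tilde{\mathfrak{g}}$ is a $\mathbf{K}$-module, and $0\to V\to\tilde{\mathfrak{g}}\to\mathfrak{g}\to 0$ is an exact sequence of $\mathbf{K}$-modules. Since $\mathbf{K}$ is reductive (in our applications a product of symplectic and general linear groups, and we assume this in general), the sequence admits a $\mathbf{K}$-equivariant linear section $\sigma:\mathfrak{g}\to\tilde{\mathfrak{g}}$. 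These modules are infinite dimensional, but they are unions or inverse limits of finite dimensional rational $\mathbf{K}$-modules. Averaging a section level by level, or splitting each isotypic component, gives such a $\sigma$. We also arrange that $\sigma|_{\mathfrak{k}}$ equals the Lie algebra map of the splitting $\mathbf{K}\to\tilde{\mathbf{F}}$. This is possible because $\mathfrak{k}$ is a $\mathbf{K}$-submodule, so we may first fix $\sigma$ on $\mathfrak{k}$ and then extend equivariantly over a $\mathbf{K}$-stable complement of $\mathfrak{k}$ in $\mathfrak{g}$.

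\emph{Step 2: the cocycle.} Define
\[
C(a,b)=[\sigma a,\sigma b]-\sigma[a,b]\in V .
\]
This cochain has the following properties.
\begin{itemize}
\item It is $\mathbf{K}$-invariant, because $\sigma$ and the brackets are $\mathbf{K}$-equivariant.
\item It satisfies $i_xC=0$ for $x\in\mathfrak{k}$. Indeed, $[\sigma x,\sigma b]$ is the derivative of the $\mathbf{K}$-action on $\sigma b$, which equals $\sigma[x,b]$ by equivariance. This is the key use of compatibility between the $\mathbf{F}$-action on $\mathfrak{g}$ and the adjoint action.
\item It is closed, $\delta_{Lie}C=0$. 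This is the usual Jacobi identity computation in $\tilde{\mathfrak{g}}$, using that $V$ is abelian and that $\mathfrak{g}$ acts on $V$ by $a\cdot v=[\sigma a,v]$.
\end{itemize}
So $C\in C^2(\mathfrak{g},\mathbf{K};V)$ is a cocycle. Changing $\sigma$ by an equivariant map $\mu:\mathfrak{g}\to V$ vanishing on $\mathfrak{k}$ changes $C$ by $\delta_{Lie}\mu$, where $\mu$ is a relative $1$-cochain. The class is therefore well defined.

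\emph{Step 3: triviality on $\tilde{\mathfrak{g}}$.} Let $\pi:\tilde{\mathfrak{g}}\to\mathfrak{g}$ be the projection. Pull back along $\pi$, with $\tilde{\mathfrak{g}}$ acting on $V$ through $\pi$. Consider the $1$-cochain
\[
\beta=\mathrm{id}-\sigma\pi:\tilde{\mathfrak{g}}\to V .
\]
It is $\mathbf{K}$-equivariant and vanishes on $\tilde{\mathfrak{k}}=\sigma(\mathfrak{k})$, so it is relative. A direct computation gives
\[
\delta\beta(x,y)=[\sigma\pi x,\sigma\pi y]-\sigma\pi[x,y]=\pi^*C(x,y),
\]
up to sign conventions. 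In this computation the terms $[x,\beta y]$ combine with $[x,y]$, using that $V$ is abelian. Hence $\pi^*C=\pm\delta\beta$, so the class lies in the kernel.

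\emph{Main obstacle.} The main obstacle is Step 1: the existence of an equivariant splitting compatible on $\mathfrak{k}$ for infinite-dimensional pro-modules. I would handle it via the filtrations, since each quotient is a finite dimensional rational $\mathbf{K}$-module, and pass to the limit compatibly.
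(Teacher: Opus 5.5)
Your proposal is correct and follows essentially the paper's route: the paper treats the statement as immediate from a $\mathbf{K}$-invariant vector space splitting plus the standard construction of the extension cocycle, and it tacitly relies on reductivity of $\mathbf{K}$ to get that splitting, just as you do in Step 1. Your Steps 2 and 3 supply the details the paper omits, and with the paper's sign convention for $\delta_{Lie}$ your computation gives $\delta\beta=-\pi^*C$, which shows the class vanishes in $H^2(\tilde{\mathfrak{g}},\mathbf{K};V)$.
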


\subsection{Main example}

The main example is as follows. For $k \geq 0$, the Harish-Chandra 
pairs is given by $\left(\mathbf{F}_k, \mathfrak{g}_k \right)$
where 
$\mathbf{F}_k = Aut \left(\mathcal{D}, \mathcal{M}_k\right)$ is the group 
of $\kappa\llbracket \hbar \rrbracket$-linear automorphisms 
of the pair $\left(\mathcal{D}, \mathcal{M}_k\right)$ which 
 preserve the filtration by powers of maximal ideal in $\mathcal{D}$, 
 and $\mathfrak{g}_k$ was defined in the beginning of this section.
 Note that $\mathfrak{f}_k = Lie\left(F_k\right)$ is naturally a subalgebra of 
 $\mathfrak{g}_k$ but latter is strictly larger since
 derivations $\frac{1}{\hbar}\left[x_i, \cdot\right]$ do not preserve the 
 maximal ideal of $\mathcal{D}$ generated by $x_i, y_j$ and $\hbar$.
If $k +1 \leq \ell  \leq 2k+1$, we
have an abelian extension of Harish-Chandra pairs 
\begin{equation}
\label{extention-kl}
1 \to \left(\mathfrak{h} \left( k, \ell \right), \mathfrak{h} \left(k, \ell \right) \right) \to \left( \mathbf{F}_\ell, \mathfrak{g}_\ell \right) \to \left( \mathbf{F}_k, \mathfrak{g}_k \right) \to 1
\end{equation}

\begin{cor}
\label{vanishing_cocycles}
For $k +1 \leq \ell  \leq 2k+1$, the extension \eqref{extention-kl}
splits over the reductive subgroup $\mathbf{K} = Sp\left( 2n, \mathbb{C} \right) 
\times GL\left(e, \mathbb{C}\right)$ and hence its extension cocycle is $\mathbf{K}$-invariant. It belongs to the kernel of 
$$
H^{2}\left(\mathfrak{g}_{k}, 
\mathbf{K};\mathfrak{h}(k, \ell)\right)\to H^{2}\left(\mathfrak{g}_{\ell}, \mathbf{K};\mathfrak{h}(k, \ell) \right)
$$
\end{cor}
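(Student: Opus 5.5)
The plan is to deduce the corollary from the general theorem on abelian extensions of Harish-Chandra pairs stated just above, applied to \eqref{extention-kl}. That theorem needs two inputs: that \eqref{extention-kl} really is an abelian extension of Harish-Chandra pairs, and that it splits over $\mathbf{K} = Sp(2n) \times GL(e)$.

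First I will confirm the structure of \eqref{extention-kl}. The kernel of $\mathbf{F}_\ell \to \mathbf{F}_k$ consists of pairs $(\mathrm{id}, 1 + \hbar^{k+1} A)$ with $A \in \mathfrak{gl}(e, \mathcal{D}/\hbar^{s+1}\mathcal{D})$ and $s = \ell - k - 1$. Since $2(k+1) > \ell$, the product rule gives
$$
(1 + \hbar^{k+1}A)(1 + \hbar^{k+1}B) = 1 + \hbar^{k+1}(A + B) \pmod{\hbar^{\ell+1}}.
$$
So this kernel is the additive group of $\mathfrak{h}(k,\ell)$, and the exponential map is just $A \mapsto 1 + \hbar^{k+1}A$. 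The same bound $2k+2 > \ell$ shows that the Lie bracket on $\mathfrak{h}(k,\ell)$ vanishes, which is the abelian extension \eqref{eq:Lie_alg_SES}. The conjugation action of $\mathbf{F}_\ell$ on the kernel, together with the adjoint action of $\mathfrak{g}_\ell$, factors through $(\mathbf{F}_k, \mathfrak{g}_k)$, so $\mathfrak{h}(k,\ell)$ is a Harish-Chandra module over $(\mathbf{F}_k, \mathfrak{g}_k)$. Surjectivity of $\mathbf{F}_\ell \to \mathbf{F}_k$ follows by lifting: take the $\mathcal{D}$-automorphism unchanged and lift the matrix of the module automorphism from $\mathcal{D}/\hbar^{k+1}$ to $\mathcal{D}/\hbar^{\ell+1}$; the lift is invertible because invertibility is detected modulo $\hbar$ and the maximal ideal.

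The key step is the splitting over $\mathbf{K}$. I would embed $\mathbf{K}$ into every $\mathbf{F}_m$ uniformly. An element $g \in Sp(2n,\kappa)$ acts on $\mathcal{D}$ by a linear change of the generators $x_i, y_j$; this respects the relation $[y_j, x_k] = \delta_{jk}\hbar$ because $g$ is symplectic, and it acts on $\mathcal{M}_m$ componentwise. An element $h \in GL(e,\kappa)$ acts on $\mathcal{M}_m = (\mathcal{D}/\hbar^{m+1})^e$ by constant matrices, with trivial action on $\mathcal{D}$. These formulas are independent of $m$, so they commute with the reduction $\mathbf{F}_\ell \to \mathbf{F}_k$ and give a group-homomorphic section over $\mathbf{K}$. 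Their differentials give the $\mathfrak{sp}(2n) \oplus \mathfrak{gl}(e)$-invariant vector space splittings quoted before Theorem \ref{thm:class_C_in_ker_nu}. Because $\mathbf{K}$ is reductive and connected in characteristic zero, we can choose the full vector space splitting $\mathfrak{g}_\ell \simeq \mathfrak{g}_k \oplus \mathfrak{h}(k,\ell)$ to be $\mathbf{K}$-equivariant, for instance by averaging, or by the complete reducibility of the finite-dimensional graded pieces. The resulting cocycle $C_\ell$ is then $\mathbf{K}$-invariant. It also vanishes when one argument lies in $\mathfrak{k}$, since the splitting restricted to $\mathfrak{k}$ is the Lie homomorphism coming from the group section.

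Finally, I would invoke the abelian-extension theorem to conclude that the class of $C_\ell$ lies in $H^2(\mathfrak{g}_k, \mathbf{K}; \mathfrak{h}(k,\ell))$ and maps to zero in $H^2(\mathfrak{g}_\ell, \mathbf{K}; \mathfrak{h}(k,\ell))$. Concretely, the pullback of $C_\ell$ to $\mathfrak{g}_\ell$ equals $\delta_{Lie}$ of the $\mathbf{K}$-invariant projection $\mathfrak{g}_\ell \to \mathfrak{h}(k,\ell)$ determined by the splitting. This is exactly the content of Theorem \ref{thm:class_C_in_ker_nu}, now phrased with $\mathbf{K}$ in place of $\mathfrak{k}$; the two versions agree by the remark identifying $\mathbf{K}$-invariant cochains with $\mathfrak{k}$-invariant ones for connected reductive $\mathbf{K}$.

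The main obstacle is making the equivariance work on the infinite-dimensional, filtered algebra $\mathfrak{g}_k$. I would handle it by working degree by degree in the grading in which $x_i$, $y_j$ have degree $1$ and $\hbar$ has degree $2$. Each graded piece is a finite-dimensional $\mathbf{K}$-module, so we can choose a $\mathbf{K}$-equivariant complement to $\mathfrak{h}(k,\ell)$ in each piece and then pass to the completion.
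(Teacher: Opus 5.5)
Your proposal is correct and follows essentially the paper's route. Like the paper, you check that \eqref{extention-kl} is an abelian Harish-Chandra extension (via $2k+2>\ell$), note that it splits over $\mathbf{K}$ through the constant symplectic and linear substitutions, pick a $\mathbf{K}$-invariant vector space splitting, and apply the general abelian-extension theorem, with the pullback of $C_\ell$ to $\mathfrak{g}_\ell$ being the coboundary of the $\mathbf{K}$-invariant projection onto $\mathfrak{h}(k,\ell)$.

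You give more detail than the paper, which offers no proof beyond this citation. The one point worth making explicit is why your degree-wise section restricts to the Lie section on $\mathfrak{k}$: $\mathfrak{k}$ sits in degree $0$, where $\mathfrak{h}(k,\ell)$ has no component. Also, ``averaging'' should be read as complete reducibility, i.e.\ the Reynolds operator, on the finite-dimensional graded pieces.
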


\section{Harish-Chandra torsor approach.}

\subsection{Transitive Harish-Chandra torsors, their lifts and Gelfand-Fuchs map.}

A general notion of Harish-Chandra torsors can be found in \cite{BezKal03} 
but in this paper we only need its transive version. A \textit{transitive Harish-Chandra} 
torsor over a Harish-Chandra pair  $\left(\mathbf{F}, \mathfrak{g}\right)$ on a
complex manifold or an algebraic scheme $M$ is an $\mathbf{F}$-torsor $\pi: P \to M$ and 
an $\mathbf{F}$-equivariant Lie morphism $\mathfrak{g} \to \Gamma\left(P, T_P\right)$
extending the morphism of $\mathfrak{f}$ to global vector fields along the fibers; 
and such that the induced bundle morphism $\mathcal{O}_P \otimes_\kappa \mathfrak{g} \to T_P$ 
of locally free sheaves on $P$, is an isomorphism. Note that this implies that each 
point $p \in P$ of the tangent space is identified with $\mathfrak{g}$ and that the 
quotient $\mathfrak{g}/\mathfrak{f}$ projects isomorphically onto the tangent
space of $z = \pi\left(p\right)$. 

\bigskip
\noindent
For a transitive Harish-Chandra torsor $P$, we can apply two constructions. 
First, as explained in \cite{BezKal03}, for any $ \left(\mathbf{F}, \mathfrak{g}\right)$-module 
$V$ we can form an associated vector bundle $V_P$ on $M$ viewing $P$ as a usual 
$\mathbf{F}$-torsor, while the transitive Harish-Chandra structure on $P$ gives $V_P$ 
a flat connection. In the examples relevant to us, the sheaf of flat sections of $V_P$  
defines a deformation quantization $\mathcal{O}_\hbar$ of the structure sheaf, or an order 
$k$ deformation quantization $E_k$ of a vector bundle. 

Then $V \mapsto V_P$ gives
 a functor from the category of $ \left(\mathbf{F}, \mathfrak{g}\right)$-modules to the
 category of bundles on $M$ with a flat algebraic connection.
 Since the Lie algebra cohomology group $H^u\left(\mathfrak{g}, V\right)$
 computes $Ext^u \left(\kappa, V\right)$ in the category of such modules, it
 maps to $Ext^u \left(\kappa_P \simeq \mathcal{O}, V_P\right)$ in the 
 category of bundles with a flat connection, i.e. the
 de Rham cohomology group $H^u_{DR} \left(M, V_P\right)$. A minor modification
 of this construction involving relative Lie cohomology will 
 produce cohomology classes of endomorphism algebras of 
 quantized bundles.

\bigskip
\noindent
 In more detail: the transitive Harish-Chandra structure converts any $\ell$-cochain 
 $\alpha: \Lambda^{\ell} \mathfrak{g} \to V$ to an $\ell$-form on $P$
 with  values in $V$ (since every tangent space is identified with 
 $\mathfrak{g} $). This gives a morphism of vector spaces 
 $$
 C^{\ell}  \left(\mathfrak{g}; V \right) \to \Gamma \left(P, \Omega^{\ell}_P \otimes_\kappa V \right)
 $$
 which commutes with the differentials, as the value of the de Rham differential $d_{dR}$  at a set of vector fields is given by the Cartan homotopy
 formula matching the formula of the Lie algebra differential $\delta_{Lie}$. For a subgroup
 $\mathbf{K} \subset \mathbf{F}$ its restriction to $C^\ell \left(\mathfrak{g}, \mathbf{K}; V \right) 
 \subset C^\ell \left(\mathfrak{g}; V \right)$ takes 
 values in the subcomplex of
 $\mathbf{K}$-basic forms, that is the forms that are $\mathbf{K}$-invariant (for connected $\mathbf{K}$ this 
can be rephrased as $L_{\widetilde{\mathfrak{k}}} \alpha = 0$  all $\mathfrak{k}\in Lie\left( \mathbf{K} \right) $, where $\widetilde{\mathfrak{k}}$ is the vector field induced by $\mathfrak{k}$) and horizontal ($i_{\widetilde{\mathfrak{k}}} \alpha = 0$, i.e. $\alpha$ vanishes on any vector that is in the kernel of the differential of the projection map $\pi : P \rightarrow M$), cf. 
Chapter 9 in \cite{GS}. Note 
that the Lie derivative along a vector field induced by $\mathfrak{k}$ involves the action of the Lie algebra on the module $V$ (i.e. the trivial connection on $\mathcal{O}_P \otimes_\kappa V$
is adjusted by the endmonorphism valued 1-form obtained from the action morphism 
$\mathfrak{g} \to End_\kappa(V)$).  This defines 
the \textit{Gelfand-Fuchs} map 
$$
GF \left(P \right): C^{\bullet} \left(\mathfrak{g}, \mathbf{K}; V \right) \to \Gamma \left(M, \pi_*  \left(\Omega_P^\bullet \otimes_\kappa
  V \right)^{\mathbf{K}-basic} \right) 
$$
and we use the same notation $GF$ for the induced map on cohomology.

As $V$ is a Harish-Chandra module over $ \left(\mathbf{F}, \mathfrak{g}\right)$ and $P$
is a transitive Harish-Chandra torsor over the same pair, the
associated vector bundle $V_P$ over $M$ has a canonical flat connection. 
We denote by $\Omega^\bullet(V_P)$ the corresponding de Rham complex.
When $\mathbf{F} = \mathbf{K} \ltimes \mathbf{U}$ with prounipotent $\mathbf{U}$, it will be proved below in 
Theorem \ref{jetsandtorsors} that there exists a quasi-isomorphism of
sheaves of complexes
 \begin{equation}
 \label{eq:quasi-1}
  \Omega^\bullet_M \left(V_P \right) \to \pi_* \left(\Omega_P^\bullet \otimes_\kappa
  V \right)^{\mathbf{K}-basic}
 \end{equation}
 
Taking cohomology we obtain the Gelfand-Fuchs map: 
$$
GF: H^\bullet \left(\mathfrak{g}, \mathbf{K}; V \right) \to H^\bullet_{DR} \left(M, V_P \right)
$$
By Section 2.3 of \cite{BezKal03}, for a Harish-Chandra pair extension \eqref{extensionHC} we have
the following slight reformulation of Proposition 2.7 in \textit{loc. cit.}.
The vanishing part is a direct 
consequence of Corollary \ref{vanishing_cocycles}.

\begin{prop}
\label{lift}
Let $c \in H^2\left(\mathfrak{g}, \mathbf{K}; V \right)$ be the class of the 
extension \eqref{extensionHC} and $P \to M$ a transitive Harish-Chandra torsor 
over $\left(\mathbf{F}, \mathfrak{g} \right)$. Then $P$ lifts to a transitive Harish-Chandra torsor $\widetilde{P}$ over $\left(\tilde{\mathbf{F}}, \tilde{\mathfrak{g}} \right)$
if an only if the image  $GF\left(c \right) \in H^2_{DR}\left(M; V_P \right)$ is zero. 
If this is the case, the set of equivalence classes of 
such lifts is a torsor over $H^1_{DR}\left(M; V_P \right)$
\end{prop}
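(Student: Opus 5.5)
The plan is to reduce the lifting problem to the standard comparison between torsors and Čech/de Rham data, following Section 2.3 of \cite{BezKal03}. First I will recast a lift in purely torsor-theoretic terms. A lift $\widetilde{P}$ of $P$ is an $\tilde{\mathbf{F}}$-torsor with a $\tilde{\mathfrak{g}}$-action and an isomorphism $\widetilde{P}/V \simeq P$ compatible with both structures. Since $V$ is a vector group and the extension splits over $\mathbf{K}$, I will reduce the structure group. Writing $\mathbf{F} = \mathbf{U} \rtimes \mathbf{K}$ with prounipotent $\mathbf{U}$, the quotient $P/\mathbf{K}$ is an affine-fibered bundle over $M$ with contractible fibers. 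Hence $\mathbf{K}$-basic forms on $P$ compute the de Rham cohomology of $M$ with coefficients in $V_P$; this is the quasi-isomorphism \eqref{eq:quasi-1}, which I take as given from the forward-referenced Theorem \ref{jetsandtorsors}.

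Second, I will construct the obstruction explicitly. Choose a $\mathbf{K}$-equivariant vector space splitting $\tilde{\mathfrak{g}} \simeq \mathfrak{g} \oplus V$, whose curvature is the relative cocycle $C$ representing $c$. Locally on $M$ the $\mathbf{F}$-torsor $P$ lifts to an $\tilde{\mathbf{F}}$-torsor $\widetilde{P}_U = P|_U \times^{\mathbf{F}} \tilde{\mathbf{F}}$, using the splitting over $\mathbf{K}$ and the contractibility of $\mathbf{U}$-fibers.

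The $\tilde{\mathfrak{g}}$-action must then be defined. Lift each vector field coming from $a \in \mathfrak{g}$ by the splitting, and correct it by a $V$-valued $1$-form $\theta$ on $P$. The failure of the lifted map to be a Lie morphism is exactly $GF(C) + d\theta$. A global lift exists precisely when $GF(C)$ is exact in the $\mathbf{K}$-basic complex. Translating through \eqref{eq:quasi-1} gives the vanishing of $GF(c) \in H^2_{DR}(M; V_P)$. In the other direction, if $GF(C) = -d\theta$ with $\theta$ basic, then the corrected action is a Lie morphism. Transitivity holds automatically, since $V$ acts along the fibers of $\widetilde{P} \to P$, and $\mathbf{K}$-basicness ensures $\tilde{\mathbf{F}}$-equivariance.

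Third, I will classify the lifts. Two choices of $\theta$ giving the same class differ by $d$ of a basic $0$-form, which integrates to a gauge automorphism of $\widetilde{P}$ by the abelian group $V_P$. Conversely, any isomorphism of lifts covering the identity of $P$ is of this form. So equivalence classes of lifts form a torsor over closed basic $1$-forms modulo exact ones, which is $H^1_{DR}(M; V_P)$.

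The main obstacle is making the local-to-global step rigorous: that torsor lifts are locally unobstructed and that automorphisms of lifts are exactly flat sections of $V_P$. This is where the prounipotence of $\mathbf{U}$ and the $\mathbf{K}$-splitting are used. I would handle it by citing Proposition 2.7 of \cite{BezKal03} directly rather than redoing it. Finally, I will add that in our situation the class $c$ dies on $\tilde{\mathfrak{g}}$ by Corollary \ref{vanishing_cocycles}, which explains the consistency of the construction.
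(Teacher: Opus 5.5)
Your proposal ends up where the paper does: the paper gives no independent argument and simply invokes Proposition 2.7 of \cite{BezKal03}, together with the remark about Corollary \ref{vanishing_cocycles}, and that is exactly the result you fall back on for the rigorous step, so the approaches agree. One caution about your heuristic sketch: solving $GF(C)=-d\theta$ with a \emph{global} $\mathbf{K}$-basic $\theta$, and classifying lifts by global closed $1$-forms modulo exact ones, only computes $H^\bullet_{DR}(M;V_P)$ when the relevant sheaves are acyclic (as in the $C^\infty$ model of Section 5), and in the Zariski setting the \v{C}ech part of the hypercohomology, including the possible non-existence of a global reduction of $P$ to $\mathbf{K}$ and hence of a global torsor lift, is precisely what the cited result handles.
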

In \textit{loc.cit.} this gives a alternative construction for the Deligne 
class of a quantization of the structure sheaf, while we intend to use the statement to 
study obstructions to quantization of bundles. 

\subsection{Torsor point of view for quantizations}
\label{alltheaction}

We continue with the notation of Section 1, assuming we are given 
a deformation quantization $\mathcal{O}_{\hbar}$ of the sheaf 
$\mathcal{O}_M$ of regular functions on an algebraic variety 
compatible with the algebraic symplectic form $\omega$, and 
an order $k$ deformation quantization $E_k$ of a locally free sheaf $E$.

Let $f: Z = Spec\left( S \right) \to U = Spec\left(R \right) \subset M$ be a morphism of 
of schemes with the image in the affine open subset $U$, and 
let $I_f$ be the kernel of the composition 
$$
S \otimes_\kappa \mathcal{O}_{\hbar} \left(U \right) \to 
S \otimes_\kappa R \to S \otimes_\kappa S \to S.
$$
The first map is obtained from reduction modulo $\hbar$ by extension of 
scalars, the second is $Id_S \otimes f^*$ and the last map is the product
on $S$. Let $Jets_{f}^\infty\mathcal{O}_h\left(R\right)$, resp. $Jets_{f}^\infty
E_k \left(R\right)$, be $I_f$-adic completion of $S \otimes_\kappa \mathcal{O}_{\hbar}  \left(U \right)$, resp. $S \otimes_\kappa E_k \left(U \right)$. 
Using non-commutative localization in 
Section 2 of \cite{Yekutieli2013}, one can sheafify both constructions 
in the Zariski topology, obtaining Zariski sheaves $J_f^\infty \mathcal{O}_\hbar,  J_f^\infty E_k$ 
on a scheme $Z$ equipped with a morphism of
schemes $f: Z \to M$. We will write $I, J^\infty \mathcal{O}_\hbar, J^\infty E_k$ 
in the case $Z = M, f = id$.

\begin{theorem}
\label{completion} 
Every point $x$ of $M$ admits an affine open neighborhood $U$ with
 a basis $\left(s_1, \ldots, s_e \right)$ of sections of $E$
trivializing it, and a collection of 1-forms
$\alpha_1, \ldots, \alpha_{2n}$ trivializing $\Omega^1_U$ such that
$$
\omega|_U = \sum_{i =1}^n \alpha_i \wedge \alpha_{i+n}.
$$
Moreover, for such $U$ and a fixed choice of 
$\mathcal{O}_\hbar$, $E_k$ (or $E_\hbar$ if $k = \infty$), one can find 
isomorphisms compatible with the algebra and module structures, respectively:
\begin{equation}
\label{zariski-jets}
J^\infty \mathcal{O}_\hbar|_U \simeq \mathcal{O}_U \widehat{\otimes}_\kappa \mathcal{D}, 
\qquad J^\infty E_k|_U \simeq \mathcal{O}_U \widehat{\otimes}_\kappa 
\mathcal{M}_k
\end{equation}
where $\mathcal{D}$ is the formal Weyl algebra and $\mathcal{M}_k =
\left( \mathcal{D}/\hbar^{k+1} \mathcal{D} \right)^{\oplus e}$. 
Any two such isomorphisms differ by an action of
a regular function with values in
 $\mathbf{F}_k := Aut \left(\mathcal{D}, 
\mathcal{M}_k \right)$.
\end{theorem}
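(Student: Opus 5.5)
The proof splits into three parts: the local Darboux normal form of $\omega$, the construction of the jet isomorphisms by successive approximation in the $I$-adic and $\hbar$-adic filtrations, and the uniqueness statement.

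\emph{Step 1: Darboux coordinates for $\omega$.} Since $\omega$ is closed and nondegenerate, the algebraic (resp. holomorphic) Darboux theorem should not be needed in full: we only ask for 1-forms $\alpha_i$ trivializing $\Omega^1_U$, not for exact ones. Near $x$, shrink to an affine $U$ on which $\Omega^1_U$ and $E$ are free. Pick any basis of $\Omega^1_U$ and apply symplectic Gram--Schmidt to the matrix of $\omega$. At each step we divide by a pairing $\omega(\beta,\gamma)$ that is nonzero at $x$, so we can localize to where it is invertible. After finitely many localizations we get $\omega|_U=\sum_{i=1}^n \alpha_i\wedge\alpha_{i+n}$. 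We take $s_1,\dots,s_e$ to be any trivializing basis of $E|_U$.

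\emph{Step 2: constructing the isomorphisms.}
\begin{itemize}
\item Fix $U$ small enough that it carries étale coordinates $t_1,\dots,t_{2n}$. Then $J^\infty\mathcal{O}_M|_U\simeq \mathcal{O}_U\llbracket t\otimes1-1\otimes t\rrbracket$, and modulo $\hbar$ the jet algebra is $\mathcal{O}_U\widehat\otimes\mathcal{A}$.
\item Lift the generators $u_i=t_i\otimes1-1\otimes t_i$ to $J^\infty\mathcal{O}_\hbar$ arbitrarily. Their commutators are $\hbar\,\omega(du_i,du_j)+O(\hbar^2)$, where $\omega(du_i,du_j)$ is an invertible matrix of functions.
\item Change the generators by an $\mathcal{O}_U$-linear transformation, read off from the $\alpha$'s, so that the leading commutators become standard: $[y_j,x_k]=\hbar\delta_{jk}+(\text{higher order in }\hbar\text{ and }I)$.
\item Correct the generators order by order in the combined filtration by $\hbar$ and $I$. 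At each stage the error is a closed 2-cochain, i.e. a closed 2-form with coefficients in $\mathcal{O}_U\widehat\otimes\mathcal{A}$ in the formal variables. It is removed by solving a Poisson/de Rham equation in the formal fibre coordinates, which is possible by the formal Poincaré lemma. This is the formal Darboux lemma for $\mathcal{D}$ over the base $\mathcal{O}_U$, and it gives a continuous surjection $\mathcal{O}_U\widehat\otimes\mathcal{D}\to J^\infty\mathcal{O}_\hbar|_U$.
\item Since $\mathcal{O}_\hbar$ is flat, complete and separated, and the map is an isomorphism on associated graded, it is an isomorphism.
\end{itemize}

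For the module, lift $s_1,\dots,s_e$ to sections of $J^\infty E_k|_U$. This is possible on affine $U$ by $\hbar$-adic lifting. These lifts give a map $(\mathcal{O}_U\widehat\otimes\mathcal{D}/\hbar^{k+1})^{e}\to J^\infty E_k|_U$. It is an isomorphism modulo $\hbar$ because both sides reduce to $J^\infty E$, which is free on the $s_i$. Both sides are flat over $\kappa[\hbar]/\hbar^{k+1}$, so Nakayama together with $\hbar$-adic completeness upgrades this to an isomorphism. For $k=\infty$ the same argument applies with completeness in $\hbar$.

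\emph{Step 3: uniqueness.} Given two such isomorphisms $\Phi,\Phi'$, consider the composite $\Phi'\circ\Phi^{-1}$. It is an $\mathcal{O}_U$-linear automorphism of the pair $(\mathcal{O}_U\widehat\otimes\mathcal{D},\mathcal{O}_U\widehat\otimes\mathcal{M}_k)$ that preserves the filtration by the maximal ideal, since both isomorphisms respect the $I$-adic filtration. It is therefore a regular map $U\to\mathbf{F}_k$. Regularity holds because $\mathbf{F}_k$ is proalgebraic, and on each finite-order quotient the automorphism is given by finitely many regular functions.

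The main obstacle is Step 2's filtered correction argument for the algebra. The Weyl relations must hold exactly, not just to leading order, and the corrections must be chosen uniformly over $U$ rather than only at a point. The plan is to apply the relative formal Poincaré lemma at each filtration degree, working with coefficients in $\mathcal{O}_U$. This requires $H^1$ of the relevant graded pieces to vanish; that is automatic in the formal variables, and on the affine base we only need global sections.
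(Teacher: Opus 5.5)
Your proposal is correct and follows essentially the same route as the paper. The shared steps are: symplectic Gram--Schmidt with successive localizations for the $\alpha_i$; order-by-order correction of lifted jet generators via the relative formal Poincar\'e lemma to get the exact Weyl relations; lifting the basis $s_1,\ldots,s_e$ over $\hbar$ on the affine $U$, with freeness checked modulo $\hbar$ plus flatness; and uniqueness directly from the definitions. The differences are cosmetic: the paper does the commutative Poisson--Darboux correction ($I_0$-adically) and the quantum correction ($\hbar$-adically) as two separate stages rather than one combined induction, and it proves $E_k|_U$ itself is free over $\mathcal{O}_\hbar/\hbar^{k+1}$ rather than arguing directly on $J^\infty E_k$.
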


\noindent
\textit{Proof.}
The existence of $s_1, \ldots, s_e$ on a small enough open subset is by the local freeness
of the sheaf of sections of $E$. The rest of the proof is divided into 
several steps

\medskip

\noindent
\textit{Step 1.} (Existence of $\alpha$ forms)

The isomorphism $\widehat{\mathcal{O}_{h}}\cong\mathcal{D}$ follows
from a proposition in section~5.1 of~\cite{BG2022} and we recall the 
argument briefly here. Let $\pi$ be the Poisson bivector associated to 
$\omega$.

First, one chooses a 1-form $\alpha_1$ not vanishing at $x$ (and hence 
in an open neighborhood of $x$). Then, by non-degeneracy of $\pi$, one
can find $\alpha_2$ such that $\langle \pi, \alpha_1 \wedge \alpha_2 
\rangle = f$
is a function that does not vanish at $x$. Shrinking $U$ if necessary 
and replacing $\alpha_2$ by $\frac{1}{f} \alpha_2$ we can achieve 
$\langle \pi, \alpha_1 \wedge \alpha_2 \rangle =1$. Taking
symplectic orthogonal complement of the span of $\alpha_1, \alpha_2$ 
we can similarly find $\alpha_3, \alpha_4$ in that complement, such 
that $\langle \pi, \alpha_3 \wedge \alpha_4 \rangle =1$. Iterating the 
process we get the required symplectic frame of 1-forms. 

\medskip

\noindent
\textit{Step 2.} (Jets of $\mathcal{O}_\hbar$)

Once a symplectic 
basis of 1-forms is chosen, we set $R = \Gamma(U, \mathcal{O})$ and let 
$I_0 = Ker \left(R \otimes R \to R \right)$ be
the ideal of functions vanishing on the diagonal of $U$. We
try to lift $\alpha_1, \ldots, \alpha_{2n}$ from a basis of
$\Omega^1(U) = I_0/I_0^2$ to elements 
$\widehat{\alpha}_1, \ldots, \widehat{\alpha}_{2n}$ in the ideal $\widehat{I}_0$ of
the commutative jets $J^\infty_0 \left(U\right)$ in such a way that, with 
respect to the $R$-linear 
Poisson bracket induced on the jets by $\omega$, we have
$$
\{ \widehat{\alpha}_i, \widehat{\alpha}_{i+n} \} = 1
$$
for $i = 1, \ldots, n$ and other pairwise brackets are zero. 
The proof follows a standard pattern: first we choose 
arbitrary preimages of $\alpha_i$ in $\widehat{I}_0$, and then 
adjust those at step $m \geq 2$ by elements in $\widehat{I}_0^{m+2}$ so that the required Poisson bracket identity
holds modulo $\widehat{I}_0^{m+2}$. At each step one uses the
fact that a relative 2-form of $R[t_1, \ldots, t_{2n}]$ over $R$
is exact iff it is closed. 

Once the Poisson lifts $\widehat{\alpha}_i \in J^\infty_0(U)$
are chosen, one uses a similar procedure to lift them to elements
$t_1, \ldots, t_{2n}$ in 
$J^\infty \mathcal{O}_\hbar (U)$ such that
$$
t_{i+n} t_{i} - t_{i} t_{i+n} = \hbar, \quad \textrm{ for } i = 1, \ldots, n; 
\qquad t_i t_j - t_j t_i = 0 \quad
\textrm{  if  }
\vert i - j \vert \neq n.
$$
In this case, one lifts $\widehat{\alpha}_i$ from 
$\widehat{I}_0$ to $\widehat{I}$ in an arbitrary way and 
then adjusts the lifts by multiples of $\hbar^{m+2}$, $m \geq 2$, to 
ensure that the required commutator identity holds modulo $\hbar^{m+2}$. This process gives a set of generators in 
$J^\infty(U)$ that satisfy the same relations as the generators
of $R \widehat{\otimes} \mathcal{D}$, as required.

\medskip

\noindent
\textit{Step 3.} (Jets of $E_k$)

We proceed by induction on $\ell \geq 0$ showing that a set of 
free generators of $E_{\ell} = E_k/\hbar^{\ell+1} E_k$ over 
$\mathcal{O}_\ell = \mathcal{O}_\hbar/h^{\ell+1} \mathcal{O}_\hbar$
lifts to a set of free generators of $E_{\ell+1}$  over $\mathcal{O}_{\ell+1}$.
 For $\ell = k$
this shows that $E_k$ is locally free over $\mathcal{O}_k$ in the 
Zariski topology. 
Sending $k \to \infty$
we will also obtain the proof for the infinite order quantization
$E_\hbar$. 

In fact, existence of \textit{some} lifted sections follows by induction on $\ell \geq 0$, 
as $H^1\left(U, Ker\left(E_{\ell+1} \to E_\ell \right) \right) = 0$ since 
$E \simeq Ker \left( E_{\ell+1} \to E_\ell \right)$ is coherent 
and $U$ is affine. Let $s^{\ell+1}_1, \ldots, s^{\ell+1}_e$ be a choice of lifted 
sections defining a morphism
$$
\mathcal{O}_{\ell+1}^{\oplus} \to E_{\ell+1}.
$$
We want to show it is necessarily an isomorphism. 

Indeed, to prove surjectivity choose a section $s \in E_{\ell+1}\left(U\right)$ then by inductive
assumption  

$s - \left( a_1 s_1^{\ell+1} + \ldots + a_e s^{\ell+1}_e\right)$ is in 
$\hbar^{\ell+1} E_{\ell+1}$ for some $a_1, \ldots, a_e \in \mathcal{O}_\hbar\left(U\right)$. 
Using the original assumption on the sections $s_1, \ldots, s_e$ of $E = E_0$ we can write
$$
s = \left(a_1 s_1^{\ell+1} + \ldots + a_e s^{\ell+1}_e\right) + \hbar^{\ell+1}\left(b_1 s_1 + \ldots + b_e s_e \right)
$$
for some $b_i \in \mathcal{O}_M\left(U\right)$, as required. 
Injectivity of $
\mathcal{O}_{\ell+1}^{\oplus e} \to E_{\ell+1}
$
is proved similarly: if we had a relation $\sum a_j s^{\ell+1}_j =0$
on an open subset $V \subset U$, with $a_j \in \mathcal{O}_{\ell+1}\left(V\right)$
then dividing out by the largest possible power of $\hbar$ we can
assume that at least one $a_j$ has a non-trivial reduction in 
$\mathcal{O}\left( V \right)$, and after reduction of the relation modulo 
$\hbar$ this contradicts linear independence of $s_1, 
\ldots, s_e$.

\medskip
\noindent
The statement regarding comparison of any two such isomorphisms follows 
from the definitions. 
$\square$

\bigskip
\noindent
Following the outline of Section 6 in \cite{VanDenBergh} we
define a functor
$$
\Psi: Schemes/\kappa \to Set
$$
which sends $Z$ to the set of triples $(f, \varphi, \psi)$
consisting of a morphism $f: Z \to M$ and isomorphisms
$$
\varphi: J^\infty_f \mathcal{O_\hbar}
\simeq \mathcal{O}_Z \widehat{\otimes_\kappa}
\mathcal{D}; \qquad \psi: J^\infty_f E_k 
\simeq \mathcal{O}_Z \widehat{\otimes_\kappa} 
\mathcal{M}_k
$$
compatible with algebra and module
structures, and with filtrations. Theorem 
\ref{completion} shows that locally such isomorphisms indeed exist, and
we can imitate the arguments
in Propositions 6.1.4 and 6.2.2 of \cite{VanDenBergh} to show that $\Psi$ is 
representable by a torsor $P\left(k\right)$ over the proalgebraic 
group  $\mathbf{F}_k := Aut\left(\mathcal{D}, \mathcal{M}_k\right)$. Since
 $$
 \mathfrak{g}_k = Der\left(\mathcal{D} \right) \ltimes \mathfrak{gl}\left(e, \mathcal{D}/
 \hbar^{k+1} \mathcal{D}\right) \supset Lie \left(\mathbf{F}_k \right)
 $$ 
 also acts on $\mathcal{D}$ and 
 $\mathcal{M}_k$, similarly 
 to Section 3.1 of 
 \cite{BezKal03}
 (see discussion before Definition 3.1) the torsor $P(k)$ is upgraded
 to a Harish-Chandra torsor over the pair $\left(\mathbf{F}_k, \mathfrak{g}_k \right)$.

\bigskip
\noindent
For $2k+1 \geq \ell \geq k$, given an order $\ell$ quantization $E_\ell$ that lifts an order 
$k$ quantization $E_\ell$, we have a Harish-Chandra torsor $P\left(\ell\right)$ which 
lifts $P\left(k\right)$ across the extension. By Proposition \ref{lift} 
 existence of $P\left(\ell\right)$ is equivalent to 
the vanishing of a class in $H^2_{DR}\left(M, \mathfrak{h}\left(k, \ell\right)_{P\left(k\right)}\right)$
where the coefficients stand for the flat vector bundle associated to the
Harish-Chandra module $\mathfrak{h}\left(k, \ell\right)$ and the torsor $P\left(k\right)$.
The restriction on $\ell$ is needed to ensure that $\mathfrak{h}\left(k, \ell\right)$ 
is abelian. 
Hence existence of $E_\ell$ implies that the class in de Rham cohomology 
vanishes. Conversely, assume that a choice of $P\left(\ell\right)$ is made.
Apply  the associated bundle construction to the module
$
V = \mathcal{M}_\ell
$
and the torsor $P\left(\ell\right)$ we get a bundle $V_{P\left(\ell\right)}$ with a flat connection 
(since we are working with transitive Harish-Chandra torsors)
and hence we can 
form the de Rham complex $\Omega^\bullet_M\left(V_{P\left(\ell\right)}\right)$. Below
we will prove that $E_\ell$ itself may be recovered as the  
sheaf of flat sections due to the quasi-isomorphism 
$$
E_\ell \mapsto \Omega^\bullet_M\left(V_{P\left(\ell\right)}\right).
$$ 
This gives a way
to recover $E_\ell$ from its Harish-Chandra torsor $P\left(\ell\right)$.
Therefore, the problem of lifting order $k$ quantization $E_k$
to an order $\ell$ quantization $E_{\ell}$ is equivalent to the
problem for lifting $P\left(k\right)$ to $P\left(\ell\right)$. 

For $\ell \geq 2k+2$ we can abelianize the kernel of the
Harish-Chandra extension
and obtain a \textit{necessary} condition for existence of the lift.

\begin{theorem}
\label{jetsandtorsors}
Let $\pi: P \to M$ be a transitive Harish-Chandra torsor
over a Harish-Chandra pair $ \left(\mathbf{F}, \mathfrak{g} \right)$, such that the proalgebraic group $\mathbf{F}$ is a semidirect product $\mathbf{K} \rtimes \mathbf{U}$ of a finite dimensional 
reductive group $\mathbf{K}$ and a prounipotent group $\mathbf{U}$. Let $V$ be a Harish-Chandra
module over the same pair inducing a proalgebraic vector bundle $V_P$
with a flat connection and its de Rham complex 
$\Omega^\bullet_M \left(V_P \right)$.

\bigskip

(i) there exists a quasi-isomorphism 
$$
\Omega^\bullet_M \left(V_P \right) \to \pi_* \left(\Omega_P^\bullet \otimes_\kappa
  V \right)^{\mathbf{K}-basic}
$$

(ii)  if $E_k$ is an order $k$ quantization, $P \left(k \right)$ is 
the associated transitive Harish-Chandra torsor over 
$ \left( \mathbf{F}, \mathfrak{g} \right) = \left(Aut \left(\mathcal{D}, \mathcal{M}_k \right), \mathfrak{g}_k \right)$ 
then the vector bundle $V_{P \left(k \right)}$  associated to 
$V = \mathcal{M}_k$ is isomorphic to
$J^\infty E_k$. There exists a canonical quasi-isomorphism 
$$
E_k \to \Omega^\bullet_M \left( V_{P \left(k \right)}\right)
\simeq \Omega^\bullet_M(J^\infty E_k).
$$
Similarly, the sheaf of flat sections of the bundle associated to 
$V = gl_e \left(\mathcal{D}/\hbar^{k+1} \mathcal{D} \right)$ is isomorphic to the 
sheaf of quantized isomorphisms $End_{\mathcal{O}_h}  \left(E_k \right)$.

(iii)
if $P \left(k \right)$ is a transitive Harish-Chandra torsor over 
$ \left(Aut \left(\mathcal{D}, V \right), \mathfrak{g}_k \right)$ lifting the transitive
Harish-Chandra torsor $P \left(0\right)$ corresponding to $E$ then the
sheaf $E_k$ of flat sections of  $V_{P \left(k \right)}$ is an order $k$ quantization
of $E$. Moreover, the transitive Harish-Chandra torsor associated to $E_k$ is 
canonically isomorphic to $P \left(k \right)$.
\end{theorem}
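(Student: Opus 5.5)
The proof of Theorem \ref{jetsandtorsors} splits into three parts.

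For (i), I will factor the map through the pullback. A section of $\Omega^u_M(V_P)$ pulls back along $\pi$ to an $\mathbf{F}$-equivariant, horizontal $V$-valued $u$-form on $P$, since $\pi^*V_P \simeq \mathcal{O}_P\otimes_\kappa V$ canonically. The flat connection on $V_P$ is, by construction, induced by the $\mathfrak{g}$-action. Its pullback is therefore the trivial connection twisted by the action one-form, so the map commutes with differentials. This gives an injective morphism of complexes
\[
\Omega^\bullet_M(V_P) \simeq \pi_*\left(\Omega^\bullet_P\otimes_\kappa V\right)^{\mathbf{F}-basic} \to \pi_*\left(\Omega^\bullet_P\otimes_\kappa V\right)^{\mathbf{K}-basic}.
\]
To prove it is a quasi-isomorphism, I will work locally on $M$, where $P$ is trivial, and pass to the intermediate quotient $P/\mathbf{K}$. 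The $\mathbf{K}$-basic forms on $P$ are forms on $P/\mathbf{K}$ with values in the associated bundle, and $P/\mathbf{K}\to M$ is a fibration with fibers $\mathbf{U}$. The fibers are prounipotent, hence an inverse limit of affine spaces.

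I will then filter both complexes by the number of horizontal (base) degrees, giving a Leray/Hochschild–Serre type spectral sequence. The $E_1$ page in the fiber direction computes the relative de Rham cohomology of $\mathbf{U}$ with coefficients. This equals the Lie algebra cohomology $H^\bullet(\mathfrak{u},\mathcal{O}(\mathbf{U})\otimes V)$ with coinduced coefficients, and by a Shapiro lemma argument it is $V$ in degree $0$ and zero otherwise. Equivalently, the algebraic de Rham complex of a (pro-)affine space is acyclic, and passing to the limit is harmless because the inverse systems are Mittag-Leffler. The surviving degree-$0$ part consists exactly of the $\mathbf{U}$-invariant sections. Combined with $\mathbf{K}$-basicness, these are the $\mathbf{F}$-basic forms, which identifies $E_1$ with $\Omega^\bullet_M(V_P)$. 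I expect this step to be the main obstacle, because it requires controlling completions and the pro-structure carefully, namely commuting cohomology with $\varprojlim$.

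For (ii), I will use the definition of $P(k)$ through the functor $\Psi$. A point of $P(k)$ over $z$ is an isomorphism $\psi: J^\infty_z E_k \simeq \mathcal{M}_k$, so the associated bundle $P(k)\times_{\mathbf{F}_k}\mathcal{M}_k$ is tautologically $J^\infty E_k$. Under this identification, the $\mathfrak{g}_k$-connection becomes the canonical Grothendieck connection on jets, coming from the two $\mathcal{O}$-module structures on $S\otimes E_k$. The quasi-isomorphism $E_k\to\Omega^\bullet_M(J^\infty E_k)$, $s\mapsto 1\otimes s$, is then the formal Poincaré lemma for jets. I will check it locally using Theorem \ref{completion}, which reduces it to $\mathcal{O}_U\widehat{\otimes}\mathcal{M}_k$, and to the exactness of the completed de Rham complex of $R\widehat{\otimes}\kappa[[t]]$ relative to $R$, which holds in characteristic $0$. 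The statement for $\mathfrak{gl}_e(\mathcal{D}/\hbar^{k+1})$ follows the same way, since $J^\infty End_{\mathcal{O}_\hbar}(E_k)\simeq End_{J^\infty\mathcal{O}_\hbar}(J^\infty E_k)$ and flat sections of the latter are the endomorphisms commuting with the Grothendieck connection.

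For (iii), I will define $E_k$ as the sheaf of flat sections of $V_{P(k)}$, with its module structure over the flat sections of the bundle associated to $\mathcal{D}$, which is $\mathcal{O}_\hbar$. Local freeness is checked locally by trivializing $P(k)$ compatibly with $P(0)$ and applying the formal Poincaré lemma again. The reduction $E_k/\hbar E_k\simeq E$ follows from the lifting $P(k)\to P(0)$. Finally, the identification of the torsor associated to $E_k$ with $P(k)$ is the map sending $p$ to the induced trivialization of jets. This map is $\mathbf{F}_k$-equivariant and compatible with $\mathfrak{g}_k$, hence a morphism of torsors, and therefore an isomorphism.
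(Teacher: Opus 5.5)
Your proof is correct in outline, but it organizes part (i) differently from the paper. The paper passes to $Q=P/\mathbf{U}$, a $\mathbf{K}$-torsor over $M$. It applies the proalgebraic Poincar\'e lemma to the $\mathbf{U}$-torsor $\mu\colon P\to Q$ to get a quasi-isomorphism $\Omega^\bullet_Q(\mathcal{V}_P)\to\mu_*(\Omega^\bullet_P\otimes_\kappa V)$, then pushes forward to $M$ and takes $\mathbf{K}$-basic forms on both sides. You quotient by $\mathbf{K}$ first, so that $\mathbf{K}$-basic forms become ordinary forms on $P/\mathbf{K}$ with values in a flat bundle. You then run a Leray filtration for $P/\mathbf{K}\to M$, whose fibres $\mathbf{F}/\mathbf{K}\simeq\mathbf{U}$ are pro-affine spaces on which the flat bundle attached to the $\mathbf{F}$-module $V$ is trivial. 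The paper's route is shorter because it quotes the relative Poincar\'e lemma for a torsor directly. However, its last step tacitly uses that passing to $\mathbf{K}$-basic subcomplexes preserves quasi-isomorphisms, which the paper does not justify. Your route replaces that step with a finite filtration (of length at most $\dim M$) and fibrewise acyclicity, and it never uses reductivity of $\mathbf{K}$. Part (ii) also differs. You get $V_{P(k)}\simeq J^\infty E_k$ tautologically from the functor $\Psi$. The paper instead builds $E_k\to V_{P(k)}$ by pulling $E_k$ back to $P(k)$ and proves the quasi-isomorphism by an $\hbar$-adic reduction to commutative jets of $\mathcal{O}_M$, citing Yekutieli. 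It only identifies $V_{P(k)}$ with $J^\infty E_k$ inside the proof of (iii).

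Two steps in your argument should be made explicit. First, your reduction to the relative de Rham complex of $R\widehat{\otimes}\kappa[[t]]$ over $R$ hides a filtration argument. In the trivialization of Theorem~\ref{completion}, the Grothendieck connection on $\mathcal{O}_U\widehat{\otimes}\mathcal{M}_k$ is $d$ plus a $\mathfrak{g}_k$-valued $1$-form. Only its associated graded, for a filtration combining $\hbar$-adic and $t$-adic order (e.g.\ a Fedosov-type weight), is the Koszul-type de Rham differential in the $t$-variables; this is what the paper's $\hbar$-adic spectral sequence accomplishes. Second, in (iii) the ``induced trivialization of jets'' at $p\in P(k)$ presupposes $J^\infty E_k\simeq V_{P(k)}$ for the newly defined $E_k$. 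Supply this as the paper does, from three isomorphisms:
$\mathcal{D}_{P(k)}\otimes_{\mathcal{O}_\hbar}E_k\simeq V_{P(k)}$, by local triviality;
$J^\infty\mathcal{O}_\hbar\simeq\mathcal{D}_{P(k)}$;
and $J^\infty\mathcal{O}_\hbar\otimes_{\mathcal{O}_\hbar}E_k\simeq J^\infty E_k$, by right exactness of $\otimes$ and passage to the limit over $I^r$.
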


\bigskip
\noindent 
\textbf{Proof of Theorem \ref{jetsandtorsors}:} For part (i), 
consider the quotient $Q = P/\mathbf{U}$
which is a $\mathbf{K}$-torsor and let
 $\rho: Q \to M$ be the projection. Let $\mu: P \to Q$ be the quotient map
 and $\mathcal{V}_P$ the bundle on $Q$ associated to the $\mathbf{U}$-torsor
 $\mu: P \to Q$. The $\mathcal{V}_P$ has a $\mathbf{K}$-equivariant 
 structure and its further equivariant descent to $M$ is exactly 
 $V_P$. Furthermore, the transitive Harish-Chandra structure on 
 $P$ induces a flat structure on $\mathcal{V}_P$ (as we can think of $\mu: P \to Q$
 as a transitive Harish-Chandra torsor over the pair $(\mathbf{U}, 
 \mathfrak{g})$). 
Since $\mathbf{U}$ is pro-unipotent, the embedding 
$$
\Omega_Q^\bullet (\mathcal{V}_P) \to \mu_* (\Omega^\bullet_P 
\otimes_\kappa V)
$$
is a quasi-isomorphism by the proalgebraic version of Poincare
lemma, cf. \cite{VanDenBergh}, since $P$ is trivial 
over small enough affine open sets $U$ by Theorem \ref{completion}
(so that $P|_U \simeq Q|_U \times \mathbf{U}$). 
Pushing this forward by $\rho$
and taking the $\mathbf{K}$-basic forms, gives 
(i).

For part (ii): by definition of $P \left( k \right)$, the pullback of $E_k$ to its total space
has a natural morphism to the trivial bundle with fiber $V
= \mathcal{M}_k$. Hence we 
obtain a morphism on $M$ from $E_k$ to the associated bundle $V_{P(k)}$. 
To show that this morphism induces a quasi-isomorphism between 
$E_k$ and the de Rham complex of $V_{P\left(k \right)}$ it suffices to restrict to an 
affine open subset where the isomorphisms \eqref{zariski-jets} hold and hence
$P(k)$ is trivialized. Since $E_k$ splits into direct sum of several copies of $\mathcal{O}_\hbar/
\hbar^{k+1} \mathcal{O}_\hbar$, it suffices to assume that $E_k = 
\mathcal{O}_\hbar/
\hbar^{k+1} \mathcal{O}_\hbar$. Moreover, all morphisms are compatible with 
 $\hbar$-adic filtrations and hence by a standard spectral sequence argument 
 it suffices to show that quasi-isomorphisms hold modulo $\hbar$. But then 
 the reduction of 
 $V \left(k \right)_{P \left(k \right)}$ modulo $\hbar$ is isomorphic to the jets of the 
 structure sheaf $\mathcal{O}_M$  and the result follows from Theorem 4.4 
 in \cite{Ye}.
The same argument applies to $End_{\mathcal{O}_\hbar} \left(E_k \right)$ and the
bundle associated to the module $gl_e \left(\mathcal{D}/\hbar^{k+1} \mathcal{D} \right)$.

For (iii), observe that the action of 
$\mathcal{D}$ on $V = \mathcal{M}_k$ 
is compatible with the Harish-Chandra
module structures. This induces an action of the bundle associated to 
$\mathcal{D}$ on the bundle associated to $V$, and this action is compatible
with the connections on both bundles. Hence the sheaf of flat sections of the 
former (which is $\mathcal{O}_\hbar$ by Section 5 of \cite{bib:TN}) acts on the sheaf of flat sections $E_k$
and this exhibits $E_k$ as an order $k$ quantization of $E$.
Indeed, the action of $\mathcal{O}_\hbar$ descends to 
action modulo $\hbar^{k+1}$, and since the torsor $P \left(k \right)$ is 
Zariski locally trivial, the sheaf $E_k$ of flat sections
of $V(k)$ is
Zariski locally isomorphic to $\left(\mathcal{O}_\hbar/
\hbar^{k+1} \mathcal{O}_\hbar \right)^{\oplus e}$. The same
observation also shows that the 
product map 
$$
\mathcal{D}_{P\left(k \right)} \otimes_{\mathcal{O}_\hbar} E_k 
\to V\left(k \right)_{P\left(k \right)}
$$
is an isomorphism. 

Pulling back the natural embedding $E_k \to V\left(k \right)_{P\left(k \right)}$ 
with respect to  $\pi: P\left(k \right) \to M$ we see that 
$\pi^* E_k$ is the space of flat sections of
$\mathcal{O}_{P\left(k \right)} \otimes_\kappa V\left(k \right)$ with respect to the
connection induced by the Harish-Chandra structure. 
Now we prove that 
$$
J^\infty \mathcal{O}_\hbar \otimes_{\mathcal{O}_\hbar} E_k
\simeq J^\infty E_k
$$
Indeed on $M \times M$ we have a map
$$
\left(\mathcal{O}_M \otimes_\kappa \mathcal{O}_\hbar \right)/I^r 
\otimes_{\mathcal{O}_\hbar}  E_k 
\to \left(\mathcal{O}_M \otimes_\kappa E_k \right)/I^r \left(\mathcal{O}_M \otimes_\kappa E_k \right)
$$
which is an isomorphism due to right exactness of tensor
product, and taking the limit $r \to \infty$ we get
the isomorphism for jets. 

Since the quantization 
$\mathcal{O}_\hbar$ was chosen and fixed throughout our argument
we also have $J^\infty \mathcal{O}_\hbar \simeq 
\mathcal{D}_{P\left(k \right)}$ which finally gives an isomorphism 
$$
J^\infty E_k \simeq V\left(k \right)_{P\left(k \right)}.
$$
Finally, denote temporarily by $P'\left(k \right)$ the torsor associated to 
jet trivializations of $E_k$ as in \eqref{zariski-jets}.
Pulling back the above isomorphism to $P\left(k \right)$ we get
an induced morphism $P\left(k \right) \to P'\left(k \right)$ which is 
$\mathbf{F}$-equivariant and hence is an isomorphism of
torsors. 
$\square$

\bigskip
\noindent
\textbf{Proof of Theorem 1}.

If $E_\ell$ exists then Harish-Chandra 
torsor $P\left(k \right)$ lifts to the Harish-Chandra torsor $P\left(\ell\right)$ and the
obstruction class vanishes by Proposition \ref{lift}. Conversely, 
if the obstruction class vanishes
then the lift $P\left(\ell\right)$ exists.
By Theorem \ref{jetsandtorsors} (iii) it induces 
an order $\ell$ quantization $E_\ell$. By the same result, equivalence
classes of  quantizations are in bijective correspondence with 
equivalence classes of torsor lifts.

The latter by Proposition \ref{lift} 
are parametrized by the first cohomology $H^1\left(M, End\left(E\right)_{\left(k, \ell\right]}\right)$. $\square$

\bigskip
\noindent
\textbf{Definition of $End^+_{\mathcal{O}_\hbar}\left(E_k\right)$}.
In view of part (ii) of the previous theorem, we define
$End^+_{\mathcal{O}_\hbar}\left(E_k\right)$ as the sheaf of flat sections
of the bundle associated to the Harish-Chandra torsor $P\left(k \right)$
and the Harish-Chandra module $\mathfrak{h}^+\left(k\right) = \mathfrak{h}^+\left(k, 2k+2\right)$
over the pair $\left(\mathbf{F}_k, \mathfrak{g}_k\right)$, defined in 
\eqref{eq:Lie_alg_SES-prime}. 

Observe that there is a short exact sequence of modules 
$$
0 \to \hbar^{2k+2} \cdot \mathcal{A} \to \mathfrak{h}^+\left(k, 2k+2\right)
\to \mathfrak{h}\left(k, 2k+1\right) \to 0
$$
which is not split on the level of $\mathfrak{g}_k$-modules since 
the image of $\hbar^{2k+1} \mathcal{A} \cdot Id \subset \mathfrak{h}\left(k, 2k+1\right)$ in $\mathfrak{h}^+\left(k, 2k+2\right)$ 
would not be stable under the action of $(\mathcal{D}/\hbar^{k+1} \mathcal{D})Id
\subset gl_e(\mathcal{D}/\hbar^{k+1} \mathcal{D}) \subset 
\mathfrak{g}_k$ due to the nontriviality of the Poisson bracket on 
$\mathcal{A}$.

\bigskip
\noindent
\textbf{Proof of Theorem 2}. If $E_\ell$ exists, consider 
the Harish-Chandra torsor $P\left(\ell\right)$ associated to it. 
Let $H'\left(\ell\right)  \subset Aut\left(\mathcal{D}, \mathcal{M}_\ell\right) $ be the subgroup
of automorphisms which are identity on $\mathcal{D}$ and 
on $\mathcal{M}_\ell$ have the form $exp \left(\hbar^{2k+1} X\right)$ with 
$X \left(mod\ \hbar\right)$ having trace zero. 

Then $\mathbf{F}\left(\ell\right)/H_\ell'$ is the group part of the 
abelian Harish-Chandra extension with the Lie part 
\eqref{eq:Lie_alg_SES-prime}, hence $P\left(\ell\right)/H_\ell'$ is a 
torsor lift of $P\left(k\right)$ and its existence implies by Proposition 
\ref{lift} that the obstruction class vanishes. $\square$

\section{The holomorphic case and Fedosov connections for bundles.}

\subsection{Existence of a Dolbeault model}

In this section we focus on the case $\kappa = \mathbb{C}$, 
working in the setting of complex manifolds and analytic 
topology. Following [BNT], observed that for there are $C^\infty$
isomorphisms: 
\begin{equation}
\label{Cinfty}
\mathcal{A}^0 \left(J^\infty \left(\mathcal{O}_h \right) \right)
\simeq \mathcal{A}^0 \left(S_\hbar \left(\Omega_M \right) \right); 
\end{equation}
where $\mathcal{A}^0$ stands for the sheaf of smooth sections
of a holomorphic bundle (in the analytic topology) and 
$S_\hbar$ is the completed symmetric algebra of the 
holomorphic cotangent
bundle $\Omega_M$ with coefficients in $\mathbb{C} \left[ \left[\hbar \right] \right]$.
Moreover, this isomorphism is compatible with multiplicative 
structures on both sides (but not with the natural 
Dolbeault differentials).
We claim that there is a similar isomorphism 
\begin{equation}
\label{smoothE}
s_k: \mathcal{A}^0 \left(J^\infty \left(E_k\right)\right) \simeq 
\mathcal{A}^0 \left( \left[S_\hbar \left( \Omega_M \right)/ \left( \hbar^{k+1}\right) \right] 
\otimes_{\mathcal{O}_M} E \right);
\end{equation}
which agrees with the module structures on both sides (not with the differentials).

Indeed, by the end of the previous section and Section 4 of [BK]
the jet bundles are associated with the torsor $P \left( k \right)$
and $(\mathbf{F}_k, \mathfrak{g}_k)$-modules 
$\mathcal{D}$ and $V\left( k \right)$, 
respectively. Both carry filtrations induced by the powers of
the maximal ideal $\mathfrak{m} \subset \mathcal{D}$ and the 
completed associated graded objects are isomorphic to the objects
on the right. Starting with the natural isomorphisms modulo $\hbar$ we inductively lift them to $C^\infty$ isomorphisms 
modulo $\hbar^{\ell+1}$ for 
$\ell \geq 0$. Moreover, these isomorphisms can be chosen  
to be compatible with the algebra and module structures. 
At each step obstructions to such a lift
are measured by cohomology classes of $C^\infty$ sections 
of vector bundles. These cohomology groups vanish due to existence
of partition of unity for such sections.

\bigskip

From now on, for a holomorphic bundle 
$W$ we write $\mathcal{A}^j(W)$ for $C^\infty$ differential 
forms of degree $j$ with values in $W$. 
The sum of the holomorphic connection (induced by the Harshi-Chandra structure) and 
the Dolbeault operator of the jet bundle, acting on
$\mathcal{A}^\bullet \left(J^\infty \left(E_k \right) \right)$, 
becomes just a smooth connection on the $[S_\hbar(\Omega_M)/(\hbar^{k+1})] \otimes_{\mathcal{O}_M} E$ side, which 
is compatible with the action of $S_\hbar(\Omega_M)$ since 
the isomorphisms \eqref{smoothE} are compatible with the module
structures. Let $\nabla_\hbar$ be the corresponding connection on 
$S_\hbar \left(\Omega_M \right)$, i.e. the Fedosov connection for the quantized
functions \cite{bib:TN}. Then the connection on $[S_\hbar(\Omega_M)/(\hbar^{k+1})] \otimes_{\mathcal{O}_M} E$
has the form 
$$
\nabla^{tot}_k = \nabla_\hbar \otimes Id_E + Id_{S_\hbar \left(\Omega_M \right)} 
\otimes\nabla^E_k
$$
with unknown 
$$
\nabla^E_k = \nabla^E + \overline{\partial} + \hbar C_1 + \hbar^2 C_2 + \ldots + \hbar^k C_k: 
\mathcal{A}^0 \left(E \right) \to \mathcal{A}^1 \left([S_\hbar(\Omega_M)/(\hbar^{k+1})] \otimes_{\mathcal{O}_M} E \right).
$$
where $\nabla^E: \mathcal{A}^0 \left(E \right) \to \mathcal{A}^1 \left(S \left(\Omega_M \right) \otimes_{\mathcal{O}_M} E \right)$ is obtained from the Grothendieck connection on 
$J^\infty \left(E_0 \right)$ and each $C_i$ is an $\mathcal{A}^0 \left(\mathcal{O}_M \right)$-linear 
operator obtained from a section of $\mathcal{A}^1 \left(\hbar^iS(\Omega_M) \otimes End \left(E \right) \right)$. 

As  $E_k$ is the sheaf of flat sections of $\nabla^{tot}_k$,
describing the order $k$ quantization $E_k$ is equivalent to 
describing the operator 
$
\nabla^E_k
$
such that the induced operator $\nabla^{tot}_{k}$ is flat modulo
$\hbar^{k+1}$. Extending an order $k$ quantization to an 
order $\ell$ quantization is equivalent to finding the second part of the
expression 
$$
\left[ \nabla^E + \overline{\partial} + \hbar C_1 + \ldots + \hbar^k C_k \right] + \hbar^{k+1}
\left[C_{k+1}+ \hbar{C_{k+2}} + \ldots + \hbar^{\ell-k-1} C_{\ell} \right]
$$
in such a way that the zero curvature condition is observed 
modulo $\hbar^{\ell+1}$.
Note that under the assumption $\ell \leq 2k+1$ we will have $\ell - k - 1 \leq k$. We will show that the term 
 $\nabla^E_{ \left(k, \ell \right]}$ given by the second pair of 
brackets defines a cohomology class of the quantized endomorphisms as
in Theorem 1. The next lemma is essentially a repetition of Theorem 1 in the context of connections.

\begin{lemma}
For $k+1 \leq \ell \leq 2k+1$ the complex
$$
\left(\bigoplus_{r=0}^{\ell-k-1} 
\hbar^r \mathcal{A}^\bullet \left( S \left( \Omega^M \right) \otimes End \left( E \right) \right), \left[ \nabla^{tot}_k, \cdot \right] \right)
$$
computes cohomology of the sheaf 
$End_{\mathcal{O}_\hbar} \left( E_k \right) / \hbar^{\ell-k}End_{\mathcal{O}_\hbar} \left( E_k \right)$.
The element 
$
\frac{1}{\hbar^{k+1}}\left(\nabla^{tot}_k\right)^2 
$
represents a second cohomology class in the complex. It vanishes if and 
only if $E_k$ admits an extension to an order $\ell$ deformation $E_\ell$, in 
which case the set of isomorphism classes of such $E_\ell$ is in 
bijective correspondence with the first cohomology group of the above complex.
\end{lemma}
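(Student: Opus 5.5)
We prove the lemma in three steps: identify the complex as a Dolbeault--de Rham resolution of the coefficient sheaf, identify the curvature as a cocycle, and then run the usual deformation-theoretic argument for flat connections.

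\emph{Step 1: the complex resolves the sheaf.} Set $s=\ell-k-1\le k$. By Theorem \ref{jetsandtorsors}(ii), $End_{\mathcal{O}_\hbar}(E_k)$ is the sheaf of flat sections of the bundle associated to $P(k)$ and the module $\mathfrak{gl}_e(\mathcal{D}/\hbar^{k+1}\mathcal{D})$. The same holds for the quotient $End_{\mathcal{O}_\hbar}(E_k)/\hbar^{s+1}$, using the module $\mathfrak{gl}_e(\mathcal{D}/\hbar^{s+1}\mathcal{D})$. We first apply the $C^\infty$ identification \eqref{smoothE} to the endomorphism bundle. Modulo $\hbar^{s+1}$ this identifies $\mathcal{A}^\bullet$ of that associated bundle with $\bigoplus_{r=0}^{s}\hbar^r\mathcal{A}^\bullet(S(\Omega_M)\otimes End(E))$. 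The total holomorphic-plus-Dolbeault operator then becomes $[\nabla^{tot}_k,\cdot]$. This operator is well defined modulo $\hbar^{s+1}$ because $s\le k$, so only $C_1,\dots,C_s$ enter.

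We then show that the sheaf complex is a resolution of $End_{\mathcal{O}_\hbar}(E_k)/\hbar^{s+1}$ by filtering in powers of $\hbar$. On each graded piece the differential is $\nabla^{End(E)}+\overline\partial$ acting on $\mathcal{A}^\bullet(S(\Omega_M)\otimes End(E))$. This is the Dolbeault resolution of the holomorphic de Rham complex of $J^\infty End(E)$, which resolves $End(E)$ by the Poincar\'e lemma for jets (as in the proof of Theorem \ref{jetsandtorsors}(ii)) together with the $\overline\partial$-Poincar\'e lemma. The sheaves $\mathcal{A}^j$ are fine, so global sections compute $H^\bullet(M,End_{\mathcal{O}_\hbar}(E_k)/\hbar^{s+1})=H^\bullet(M,End(E)_{(k,\ell]})$.

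\emph{Step 2: the curvature is a cocycle.} Flatness of $\nabla^{tot}_k$ modulo $\hbar^{k+1}$ means $(\nabla^{tot}_k)^2=\hbar^{k+1}\Theta$. Here $\Theta$ is a $2$-form with values in $S(\Omega_M)\otimes End(E)$, because $\nabla_\hbar$ is flat exactly and $\nabla^{tot}_k$ is $S_\hbar$-linear up to derivation terms; this linearity is also what makes the curvature a section of the endomorphism bundle. We reduce $\Theta$ modulo $\hbar^{s+1}$. The Bianchi identity $[\nabla^{tot}_k,(\nabla^{tot}_k)^2]=0$, after division by $\hbar^{k+1}$, gives $[\nabla^{tot}_k,\Theta]=0$ in the truncated complex, so $\Theta$ defines a class in $H^2$.

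\emph{Step 3: extensions and their classification.} An extension is a choice of $D=\hbar^{k+1}(C_{k+1}+\dots+\hbar^sC_\ell)$ such that $(\nabla^{tot}_k+D)^2\equiv 0$ modulo $\hbar^{\ell+1}$. Expanding, we get
\[
(\nabla^{tot}_k+D)^2=\hbar^{k+1}\Theta+[\nabla^{tot}_k,D]+D^2 .
\]
The term $D^2$ is divisible by $\hbar^{2k+2}$ and vanishes modulo $\hbar^{\ell+1}$ since $\ell\le 2k+1$. The remaining condition, after dividing by $\hbar^{k+1}$, is $\Theta=-[\nabla^{tot}_k,D/\hbar^{k+1}]$ modulo $\hbar^{s+1}$, i.e.\ exactness of $\Theta$. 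This is the same abelianity that appears in Theorem 1.

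For the classification, two such extensions differ by a $1$-cocycle $D-D'$. A gauge transformation $\exp(\hbar^{k+1}g)$, with $g$ of degree $0$ and $s\le k$, changes $D$ by $[\nabla^{tot}_k,\hbar^{k+1}g]$ modulo $\hbar^{\ell+1}$; again the quadratic terms are killed by $2k+2>\ell$. Conversely, every isomorphism of extensions restricting to the identity on $E_k$ is of this form, because such isomorphisms are given by $C^\infty$ module automorphisms of the jet bundle commuting with the connections. By Theorem \ref{jetsandtorsors}, isomorphism classes of $E_\ell$ correspond to gauge classes of flat $\nabla^{tot}_\ell$. Hence the set of isomorphism classes of extensions is an $H^1$-torsor, and choosing a base point puts it in bijection with $H^1$.

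I expect the main obstacle to be the justification that every order-$\ell$ extension $E_\ell$ arises from a connection of the stated form with $\nabla^{tot}_k$ fixed. This requires the isomorphism $s_\ell$ of \eqref{smoothE} for $E_\ell$ to restrict to the chosen $s_k$. My plan is to construct $s_\ell$ by lifting $s_k$ step by step in $\hbar$, using vanishing of higher cohomology of fine sheaves exactly as in the construction of \eqref{smoothE}.
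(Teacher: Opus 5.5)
Your proposal is correct and follows essentially the same route as the paper: a fine resolution of the truncated endomorphism sheaf through the $C^\infty$ model of the jet bundle, the Bianchi identity showing that $\frac{1}{\hbar^{k+1}}(\nabla^{tot}_k)^2$ is closed, affine-linearity of the flatness equation because $2k+2>\ell$, and gauge transformations $Id+\hbar^{k+1}\alpha$ for the classification. The differences are minor: you prove acyclicity by filtering the truncated complex by powers of $\hbar$ rather than by the paper's full-complex/subcomplex/quotient argument, and you make explicit the choice of $s_\ell$ restricting to $s_k$ (justified by fine-sheaf lifting), which the paper uses without comment.
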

\textit{Proof.} We have fixed an $C^\infty$ isomorphism of bundles
$J^\infty(E_k)$ and $[S_\hbar(\Omega_M)/(\hbar^{k+1})] \otimes_{\mathcal{O}_M} E$
which is multiplicatively compatible with the similar $C^\infty$ isomorphism 
of $J^\infty(\mathcal{O}_\hbar)$ and $S_\hbar(\Omega_M)$ on functions. By the previous section, 
both $E_k$ and
$\mathcal{O}_\hbar$ are quasi-isomorphic to the holomorphic de Rham complexes constructed 
from the action of Grothendieck connections on their jets of functions. The same holds for
the endomorphism bundle $End_{\mathcal{O}_\hbar} (E_k)$ (where the Grothendieck connection 
on jets of endomorphisms of $E_k$ is induced by the commutator with the Grothendieck 
connection on jets of $E_k$ itself). It follows that 
$$
\left(\bigoplus_{r=0}^{k} 
\hbar^r \mathcal{A}^\bullet \left( S \left( \Omega^M \right) \otimes End \left( E \right) \right), \left[ \nabla^{tot}_k, \cdot \right] \right)
$$
is a fine resolution of $End_{\mathcal{O}_\hbar} (E_k)$. The terms with $r = \ell-k, \ldots, k$
form a resolution of $\hbar^{\ell-k} End_{\mathcal{O}_\hbar} (E_k)$, hence the 
sum of terms with $r = 0, \ldots, \ell - r -1$ resolves 
$End_{\mathcal{O}_\hbar} \left( E_k \right) / \hbar^{\ell-k}End_{\mathcal{O}_\hbar} \left( E_k \right)$.

\bigskip
\noindent
By the flatness assumption for 
$\nabla^{tot}_k$ modulo $\hbar^{k+1}$ its full curvature element
modulo $\hbar^{\ell+1}$ has the form:
$$
R_{k, \ell} := \left(\nabla^{tot}_k \right)^2 \in \hbar^{k+1} \left(\bigoplus_{r=0}^{\ell-k-1} 
\hbar^r \mathcal{A}^2 \left( S_\hbar \left( \Omega^M \right) \otimes End \left( E \right) \right)\right)
$$
Thus the curvature condition imposed on $\nabla^{tot}_k + \hbar^{k+1} \nabla^E_{ \left( k, \ell \right]}$
reads 
$$
R_{k, \ell} + \hbar^{k+1} \left[ \nabla^{tot}_k, \nabla^E_{\left(k, \ell \right]} \right] = 0 \in 
\hbar^{k+1} \left(\bigoplus_{r=0}^{\ell-k-1} 
\hbar^r \mathcal{A}^2 \left( S_\hbar \left(\Omega^M \right) \otimes End \left( E \right) \right)\right)
$$
where we have used $2 \left( k+1 \right) > \ell$, so the equation is affine linear in the unknown
$\nabla^E_{ \left( k, \ell \right] }$.
Therefore, the existence of the unknown element 
$$
\nabla^E_{\left( k, \ell \right] } \in \bigoplus_{r=0}^{\ell-k-1} 
\hbar^r \mathcal{A}^1 \left( S \left( \Omega^M \right) \otimes End\left( E \right) \right)
$$
is equivalent to the vanishing of the 
second cohomology 
class of $\frac{1}{\hbar^{k+1}}R_{k, \ell}$. Note that this element is indeed
closed by the Bianchi Identity for $\nabla^{tot}_k$
(viewed modulo $\hbar^{\ell+1}$).

 The vanishing
of the corresponding cohomology class implies that
$\nabla^{E}_{ \left( k, \ell \right]}$ exists, and two such elements that 
differ by $\left[ \nabla^{tot}_k, \alpha \right]$ (where $\alpha$ is
a degree 0 form) correspond 
to two lifts of \eqref{smoothE} from order $k$ to 
order $\ell$, that differ by $Id + \hbar^{k+1}\alpha$. Thus, the action of  
$\left[ \nabla^{tot}_k, \alpha \right]$ identifies 
the two sheaves of flat sections. Conversely, and isomorphism of two order $\ell$
quantizations which reduces to identity modulo $\hbar^{k+1}$ will induce
an automorphism on the right hand side of \eqref{smoothE} which is equal to 
identity modulo $\hbar^{k=1}$. Subtracting $Id$ we will get the element
$\hbar^{k+1} \alpha$ and its coboundary with respect to $[\nabla_k^{tot}, \cdot]$ will 
be the difference between the two choices of $\nabla^E_{(k, \ell]}$ leading to 
equivalent order $\ell$ quantizations. 
$\square$

\subsection{The case $\ell \geq 2k+2$.}

For $\ell \geq 2k+2$ we only have a necessary condition (a sufficient
would involve nonabelian cohomology) and the obstruction sheaf 
$End_{\mathcal{O}_\hbar}^+ \left( E_k \right)$ is the same for all $\ell$ in 
the required range. In particular, we can just assume $\ell = 2k+2$.
The sheaf of interest can be resolved by the complex of sheaves
\begin{equation}
\label{end-resolution}
\left[\mathcal{A}^\bullet \left(S_\hbar \left( \Omega_M \right) \otimes End\left(E\right)/ \hbar^{k+1} S_\hbar \left( \Omega_M \right) \otimes 
End^0\left(E\right) + 
\hbar^{k+2} S_\hbar \left( \Omega_M \right) \otimes 
End \left(E\right) \right), \left[ \nabla^{tot}_k, \cdot \right] \right]
\end{equation}
where $End^0(E)$ is the sheaf of trace zero endomorphisms over $\mathcal{O}_M$.
We note here that $[\nabla^{tot}_k, \cdot]$ is indeed a differntial:
by assumption $\left( \nabla^{tot}_k\right)^2$ is a multiple of 
$\hbar^{k+1}$ hence the commutator with this operator vanishes on 
the above quotient. 

\begin{theorem}
The above complex \eqref{end-resolution} is quasi-isomorphic to $End^+_{\mathcal{O}_\hbar} \left(E\right)$
as defined at the end of Section 4. The expression $\frac{1}{\hbar^{k}} \left(\nabla_k^{tot}\right)^2$
projects to a closed section in homological degree 2. Its 
class in $H^2\left(M, End^+_{\mathcal{O}_\hbar} \left(E\right)\right)$ vanishes if 
$E_k$ admits a lift to an order $\ell$ quantization for $\ell \geq 2k+2$.
\end{theorem}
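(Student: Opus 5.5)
The plan mirrors the proof of the Lemma above for $k+1 \leq \ell \leq 2k+1$, with the abelianized module $\mathfrak{h}^+(k)$ replacing $\mathfrak{h}(k,\ell)$. There are three steps.

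\textbf{Step 1: the quasi-isomorphism.} I would use the $C^\infty$ identification \eqref{smoothE} together with the associated-bundle description of Theorem \ref{jetsandtorsors}.

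The module $\mathfrak{h}^+(k)$ is the quotient of $\hbar^{k+1}\mathfrak{gl}(e,\mathcal{D})/\hbar^{2k+2}$ by the commutator subspace. That subspace is $\hbar^{2k+2}$ times the kernel of trace modulo $\hbar$. The associated bundle of this module over $P(k)$ is therefore the jet-level version of the quotient appearing in \eqref{end-resolution}, after shifting by the placeholder $\hbar^{k+1}$. Concretely, it is $\hbar^{k+1}$ times the jets of endomorphisms modulo $\hbar^{k+1}$(trace-zero part) $+\,\hbar^{k+2}$(everything).

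Passing through \eqref{smoothE}, the holomorphic connection plus $\overline{\partial}$ becomes $[\nabla^{tot}_k,\cdot]$. By Theorem \ref{jetsandtorsors}(i) and the Dolbeault resolution, this complex resolves the sheaf of flat sections, which is $End^+_{\mathcal{O}_\hbar}(E_k)$ by definition.

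I would double-check that the differential is well defined on the quotient. The curvature $(\nabla^{tot}_k)^2$ lies in $\hbar^{k+1}$ times jets of endomorphisms. Its commutator with trace-type elements lands in $\hbar^{k+1}$ times commutators, hence in the trace-zero part. The Poisson-bracket corrections appearing at order $\hbar^{k+2}$ are killed by the quotient.

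I would also verify exactness in positive degrees with the same $\hbar$-adic filtration argument as in Theorem \ref{jetsandtorsors}(ii). The graded pieces are $End(E)$ or $\mathcal{O}_M$ resolved by jets, and the Poincaré lemma for jets applies.

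\textbf{Step 2: the curvature is closed.} I would lift $\nabla^{tot}_k$ to an arbitrary connection of the same form modulo $\hbar^{2k+2}$ and write $R=(\nabla^{tot}_k)^2\in\hbar^{k+1}(\ldots)$. Bianchi gives $[\nabla^{tot}_k,R]=0$, so the projection of $R$, renormalized by the appropriate power of $\hbar$ to match the placeholder convention, is closed in degree 2.

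Changing the lift by $\hbar^{k+1}\beta$ changes $R$ by $\hbar^{k+1}[\nabla^{tot}_k,\beta]+\hbar^{2k+2}\beta^2$. The last term is a commutator of matrix-valued one-forms modulo higher $\hbar$ order, up to Poisson corrections of order $\hbar^{2k+3}$, so it vanishes in the quotient. Hence the class is independent of the lift.

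\textbf{Step 3: vanishing when a lift exists.} If $E_\ell$ exists for some $\ell\ge 2k+2$, then $E_{2k+2}$ exists. Its Fedosov-type connection $\nabla^{tot}_{2k+2}$ provides a lift with curvature zero modulo $\hbar^{2k+3}$, so the class vanishes. This matches the Gelfand–Fuchs argument of Theorem 2 via $P(\ell)/H'_\ell$.

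\textbf{Main obstacle.} The hardest part is the careful bookkeeping in Step 2. One has to show that $\beta^2$ and the Poisson terms fall into exactly the quotient $\hbar^{k+1}\,End^0+\hbar^{k+2}$, i.e. that only the trace modulo $\hbar$ survives at order $\hbar^{2k+2}$. For this I would use the trace map $\mathfrak{gl}(e,\mathcal{D})\to\mathcal{A}$ and the description of the commutator subalgebra from Section 3.
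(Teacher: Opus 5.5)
Your proposal is correct and follows the route the paper intends. The paper writes no separate proof of this theorem; it relies on the Lemma of Section 5.1 transplanted from $\mathfrak{h}(k,\ell)$ to $\mathfrak{h}^+(k)$, on the remark just before the statement that $[(\nabla^{tot}_k)^2,\cdot]$ vanishes on the quotient, and on the ``compare the Maurer--Cartan equations'' argument of Section 5.3, which together amount to your Steps 1--3.

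Three small corrections. First, $\mathfrak{h}(k,2k+2)=\hbar^{k+1}\mathfrak{gl}(e,\mathcal{D})/\hbar^{2k+3}\mathfrak{gl}(e,\mathcal{D})$, not $/\hbar^{2k+2}$. Second, the normalization that matches \eqref{end-resolution} is $\hbar^{-(k+1)}$, as in the Lemma; the stated $\hbar^{-k}$ only yields $\hbar$ times that class. Third, in Step 3 you should state that the $C^\infty$ identification \eqref{smoothE} for $E_{2k+2}$ can be chosen to reduce to the fixed one for $E_k$, by the same partition-of-unity lifting argument. Without this, $\nabla^{tot}_{2k+2}$ is only a lift of a gauge transform $g\nabla^{tot}_k g^{-1}$, not of your $\nabla^{tot}_k$.
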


\subsection{Comparison with Gelfand-Fuks map.}

The definitions below follow the exposition in Section 2.2
of \cite{GdKN}. 

Let $\rho: R \to M$ be torsor over $Sp\left(2n, \mathbb{C}\right) \times 
GL(e, \mathbb{C})$ of pairs consisting of a (local) symplectic
frame of $T_M$ and a (local) frame of $E$. It is clear 
that the projection $\pi_k: P_k \to M$ factors through $\rho$, indeed
the group part
 $\mathbf{F}_k = Aut\left(\mathcal{D}, \mathcal{M}_k\right)$
 of the Harish-Chadra pair, admits a semidirect product decomposition 
 $\mathbf{K} \ltimes \mathbf{U}_k$ with prounipotent $\mathbf{U}_k$
and reductive 
$\mathbf{K} \simeq Sp\left(2n, \mathbb{C}\right) \times GL\left(e, \mathbb{C}\right)$ 
and in these terms $R = P_k/U_k$ with the induced $\mathbf{K}$-action.

Note that the choices of the two $C^\infty$-isomorphisms in the 
beginning of this section
define a $\mathbf{K}$-equivariant smooth section $s_k: R \to P_k$.
As $P_k$ has transitive Harish-Chandra structure, its tangent bundle 
is trivialized and the corresponding $\mathfrak{g}_k$-valued 1-form
$\omega_k \in \Gamma \left(P_k; \Omega^1 \otimes_\mathbb{C} \mathfrak{g}_k\right)$
satisfies the Maurer-Cartan equation $d \omega_k + \frac{1}{2} [\omega_k, 
\omega_k] = 0$. The same holds for its pullback $S_k := s_k^*(\omega_k) 
\in \Gamma \left(R, \mathcal{A}^1 \left(\mathcal{O}_R \otimes_{\mathbb{C}} \mathfrak{g}_k\right)\right)$, which gives the trivial 
bundle on $R$ with fiber $\mathfrak{g}_k$ a
non-trivial flat connection $\nabla_k = d + S_k$.
For any $\ell \geq k+1$ consider the following 
version of the Gelfand-Fuks map

\begin{equation}
\begin{array}{c}
GF:C_{Lie}^{\bullet}\left(\mathfrak{g}_{k}, 
\mathbf{K};\mathfrak{h}\left(k, \ell\right)\right)\rightarrow
\Gamma\left(R; 
\mathcal{A}^\bullet \left(\mathcal{O}_R \otimes_\mathbb{C}\mathfrak{h}\left(k, \ell\right)\right)\right)^{\mathbf{K}-basic}\\
GF\left(\eta\right)\left(X_{1},\cdots,X_{j}\right)=\eta\left(S_k\left(X_{1}\right),\cdots,S_k\left(X_{j}\right)\right)
\end{array}\label{eq:GF_map_defn_B}
\end{equation}
$\mathbf{K}$-invariance of the Lie cocycle $\eta$ 
ensures that $GF(\eta)$ is a closed element in the subspace of $\mathbf{K}$-basic forms.  
Note that this space may be identified with 
forms on $R/\mathbf{K} = M$ with values in the associated bundle
$$
\mathfrak{h}\left(k, \ell\right)_{R}
\simeq \bigoplus_{r = k+1}^\ell \hbar^k S\left(\Omega_M\right) \otimes 
End\left(E\right), 
$$
which has a flat connection induced by $\nabla_k$.
In particular, we can apply the Gelfand-Fuks map to the extension 2-cocycle
$$
C \in C^2 \left( \mathfrak{g}_k, \mathbf{K}; \mathfrak{h} \left(k, \ell \right) \right).
$$
associated with a vector space  splitting
$\mathfrak{g}_\ell = \mathfrak{g}_k \oplus \mathfrak{h}\left(k, \ell\right)$.
A more direct way of seeing that the image of $C$ must give a 
zero class in cohomology if $E_\ell$ exists (for $\ell 
\in \{k+1, \ldots, 2k+1\}$) is to observe
that in this case
the $\mathfrak{g}_k$-valued form $S_k$ on $R$ extends to 
a $\mathfrak{g}_\ell$-vallued form $S_\ell$ and then compare
the two Maurer-Cartan equations. Conversely, if 
$S_k$ extends to $S_\ell$ then the connection 
$d + S_\ell$ induces a $\mathbf{K}$-invariant connection 
on $\mathcal{O}_R \otimes_\mathbb{C}\mathcal{M}_\ell$. Both the bundle and the connection 
descend to $M$ and we can set $E_\ell$ to be the kernel of the
descended connection. 

\section{Special Cases and Applications.}

Observe that for $\ell = k+1$ the cohomology 
groups of Theorem 1 are simply the cohomology groups 
$H^1\left(M, End\left(E\right)\right)$ and $H^2\left(M, End\left(E\right)\right)$ of Zariski 
sheaves of $\mathcal{O}_M$-modules. 
For $\ell \in \{k+1, 2k+1\}$ the sheaf
$End\left(E\right)_{\left( k, \ell \right]}$ admits a filtration by powers of 
$\hbar$ and the associated graded sheaf of this filtration 
is isomorphic to the direct sum of $\left(\ell - k\right)$ copies of 
$End_{\mathcal{O}_M}\left(E\right)$. 
Thus, by general theory, there is a spectral sequence with 
$$
E_1^{p, q} = H^{p+q} \left(M, \hbar^{k+1+p} \cdot End\left(E\right)\right), \qquad p = 0, \ldots, 
\left(\ell-k-1\right), p+q = 0, \ldots, 2n
$$
which converges to $H^{p+q} \left(M, End\left(E\right)_{\left(k, \ell\right]}\right)$. 
In particular, vanishing of $H^i\left(M, End\left(E\right)\right)$ implies  vanishing of $H^i\left(M, End\left(E\right)_{\left(k, \ell\right]}\right)$. Either using the spectral sequence 
argument or using the case    $\ell = k+1$ repeatedly, we obtain 
\begin{cor}
If $H^2\left(M, End\left(E\right)\right) = 0$, e.g. if $M$ is a union of two 
affine open sets, then any order $k$ deformation quantization 
$E_k$ extends to an order $\ell$ deformation quantization for 
all    $\ell \geq k+1$. If, in addition $H^1\left(M, End\left(E\right)\right) = 0$, e.g.
if $M$ is affine, such an extension is unique. In particular,
the restriction of $E_k$ to any affine open subset in $M$ admits
a unique extension to quantization of infinite order. 
\end{cor}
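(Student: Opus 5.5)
The plan is to induct on $\ell$, using Theorem 1 only in its simplest case $\ell = k+1$, where the relevant sheaf has a very concrete form.

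\emph{Identifying the obstruction groups.} For $\ell = k+1$ we have $s = 0$, so $End(E)_{(k,k+1]} \simeq End_{\mathcal{O}_\hbar}(E_0) = End_{\mathcal{O}_M}(E)$ as sheaves. Here we use that $E_0 = E$ and that $\mathcal{O}_\hbar$ acts on $E_0$ through $\mathcal{O}_M$. Theorem 1 then says two things: the obstruction to passing from $E_j$ to $E_{j+1}$ lies in $H^2(M, End(E))$, and, when it vanishes, the set of extensions is a torsor over $H^1(M, End(E))$. This holds for any order $j$ quantization $E_j$, with no dependence on $j$ beyond the fixed bundle $E$.

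\emph{Existence.} Assume $H^2(M, End(E)) = 0$. Starting from $E_k$, apply Theorem 1 with $(k, \ell) = (j, j+1)$ for $j = k, k+1, \ldots$. Each obstruction vanishes, so we obtain successive extensions $E_{j+1}$ of $E_j$, and by construction each comes with a fixed isomorphism $E_{j+1}/\hbar^{j+1}E_{j+1} \simeq E_j$. For any finite $\ell$, composing these gives $E_\ell$. For $\ell = \infty$, set $E_\hbar = \varprojlim E_j$. We must check that this is a locally free $\mathcal{O}_\hbar$-module lifting $E$. On an affine $U$, Step 3 of the proof of Theorem \ref{completion} shows that free generators lift compatibly from each stage to the next. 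Their limits then give $E_\hbar|_U \simeq \mathcal{O}_\hbar|_U^{\oplus e}$, using $\hbar$-adic completeness of $\mathcal{O}_\hbar$. The inverse limit of sheaves causes no trouble here, since on affines the transition maps are surjective with coherent kernels, so the Mittag-Leffler condition holds.

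\emph{Uniqueness.} If moreover $H^1(M, End(E)) = 0$, the torsor at each step is a point, so each extension $E_j \to E_{j+1}$ is unique up to isomorphism. Uniqueness for finite $\ell$ follows by induction. Given two extensions $E_\ell, E'_\ell$ of $E_k$, their truncations $E_{j}, E'_{j}$ agree up to isomorphism at $j=k$, and uniqueness at each step propagates this upward. For $\ell = \infty$, some care is needed to make the isomorphisms compatible across $j$. The step I expect to be the main subtlety is the following. Theorem 1 classifies extensions up to isomorphism \emph{compatible with the fixed identification mod $\hbar^{j+1}$}. An isomorphism $E_j \simeq E'_j$ found at stage $j$ may not be the identity, so I would first transport $E'_{j+1}$ along it, making it an extension of $E_j$ in the strict sense, and only then apply the torsor statement. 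With this normalization the isomorphisms form a compatible system, and passing to the limit identifies $E_\hbar \simeq E'_\hbar$.

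\emph{The examples.} Coherent cohomology of the quasi-coherent sheaf $End(E)$ can be computed by the Čech complex of an affine cover, since $M$ is separated. So a cover by two affines gives $H^{\geq 2} = 0$, and $M$ affine gives $H^{\geq 1} = 0$ by Serre's vanishing. The final claim follows by applying the affine case to $E_k|_U$, which is an order $k$ quantization on $U$ of $E|_U$ with respect to $\mathcal{O}_\hbar|_U$.

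Alternatively, one could use the spectral sequence with $E_1$ terms $H^{\ast}(M, End(E))$ to get vanishing of $H^i(M, End(E)_{(k,\ell]})$ directly for $\ell \le 2k+1$ and apply Theorem 1 in one step. But the stepwise argument avoids the restriction $\ell \le 2k+1$, so I would use it.
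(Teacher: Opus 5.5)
Your proposal is correct and follows the paper's own route. The paper deduces the corollary by applying Theorem 1 repeatedly with $\ell = k+1$, where $End(E)_{(k,k+1]} \simeq End(E)$; it also mentions, as you do, the alternative via the $\hbar$-filtration spectral sequence. Your transport of $E'_{j+1}$ along the stagewise isomorphisms for uniqueness, and your inverse-limit argument for the infinite-order case, supply details the paper leaves implicit.
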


\begin{cor}
If $M$ is affine and $E$ is the trivial bundle then 
any quantization of order $k \in \left[ 1, \infty \right)$ is necessarily trivial. 
\end{cor}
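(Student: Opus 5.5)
Throughout, $E$ is trivial, say $E \simeq \mathcal{O}_M^{\oplus e}$ on affine $M$, and $E_k$ is an order $k$ quantization of $E$. The claim to prove is that $E_k \simeq (\mathcal{O}_\hbar/\hbar^{k+1}\mathcal{O}_\hbar)^{\oplus e}$ as $\mathcal{O}_\hbar$-modules, compatibly with the identification $E_k/\hbar E_k \simeq E$. I would prove it by induction on the order, using the uniqueness statement of the preceding Corollary. The alternative of lifting global sections directly, as in Step 3 of the proof of Theorem \ref{completion}, would also work. The induction rests on comparing $E_k$ with the trivial quantization $T_j := (\mathcal{O}_\hbar/\hbar^{j+1}\mathcal{O}_\hbar)^{\oplus e}$. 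For every $j$, $T_j$ is an order $j$ quantization of $E$, and $T_{j+1}$ is an extension of $T_j$.

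\textbf{Finite order.} Let $0 \le j < k$ and suppose $E_j := E_k/\hbar^{j+1}E_k \simeq T_j$. Then $E_{j+1} := E_k/\hbar^{j+2}E_k$ and $T_{j+1}$ are both order $j+1$ quantizations extending the same order $j$ quantization. Since $M$ is affine and $End(E)$ is coherent, $H^1(M, End(E)) = 0$. By Theorem 1 with $\ell = j+1$, the extensions of $E_j$ form a torsor over $H^1(M, End(E)_{(j,j+1]}) = H^1(M, End(E)) = 0$, so $E_{j+1} \simeq T_{j+1}$. The base case is $E_0 = E \simeq T_0$, which is the hypothesis. Running the induction up to $j = k$ gives $E_k \simeq T_k$.

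\textbf{The case $k = \infty$.} Here one must also check that the isomorphisms chosen at each order are compatible, so that they assemble into a single isomorphism $E_\hbar \simeq \mathcal{O}_\hbar^{\oplus e}$. The cleanest route avoids Theorem 1 and argues with global sections. Choose global sections $s_1, \ldots, s_e$ of $E$ trivializing it. They lift successively to compatible sections of each $E_j$, because $H^1(M, \ker(E_{j+1} \to E_j)) = H^1(M, E) = 0$ on affine $M$. The resulting map $\mathcal{O}_{j}^{\oplus e} \to E_j$ is an isomorphism by the surjectivity and injectivity argument of Step 3 in the proof of Theorem \ref{completion}. Taking the inverse limit, and using that both sides are $\hbar$-adically complete and separated, gives the isomorphism at infinite order.

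\textbf{Main obstacle.} The delicate point is the infinite order limit. It requires that the inverse system of isomorphisms be genuinely compatible, and that $E_\hbar = \varprojlim E_j$, which follows from completeness and separatedness. Lifting compatible sections, rather than applying the torsor statement of Theorem 1 separately at each order, is what secures this compatibility. With that argument in hand, the finite order induction could equally be phrased in the same section-lifting form.
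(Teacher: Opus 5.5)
Your proof is correct and follows the paper's route: the corollary is read off from the uniqueness half of the preceding corollary, namely Theorem 1 applied with $\ell = j+1$ at each step, where the torsor group $H^1(M, End(E)_{(j,j+1]}) = H^1(M, End(E))$ vanishes on affine $M$, so every extension of the trivial order-$j$ quantization is isomorphic to the trivial one. Your section-lifting treatment of $k=\infty$ goes beyond what is asked, since the statement only concerns finite $k \in [1,\infty)$; it is nonetheless sound, and it also gives a more elementary proof of the finite case that bypasses Theorem 1 altogether.
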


\medskip
\noindent
\textbf{Remark.} It would be interesting to 
compute the differentials of the spectral sequence in 
terms of invariants encoding the choice of $\mathcal{O}_\hbar$
and $E_k$. Theoretically, this could allow one to find situations when 
$H^2 \left( M, End\left(E\right) \right)$ does not vanish, but $H^2\left(E, End_{\mathcal{O}_\hbar}\left(E_k\right)_{\left(k, \ell\right]}\right)$ does. 

\section{Lifts of Morphisms}

In this section, we assume that $k+1 \leq \ell \leq 2k+1$ and
set $s = l-k-1$. Then $0 \leq s \leq k$. Assume 
that we are given an order $\ell$ deformation quantization 
$E_l$, and we set $E_k = E_\ell/\hbar^{k+1} E_\ell$, 
$E_s = E_\ell/\hbar^{s+1} E_\ell$. We can view $E_\ell$
as an order $\ell$ extension of $E_k$, which also fits
a short exact sequence 
$$
0 \to \hbar^{k+1} \cdot E_s \to E_\ell \to E_k \to 0
$$
of sheaves of $\mathcal{O}_\hbar$-modules. As such, it 
corresponds to a class $e_\ell \in Ext^1_{\mathcal{O}_{\hbar}}
(E_k, \hbar^{k+1} \cdot E_s)$. 

Suppose that $F_\ell, F_k, F_s$ are similar deformation 
quantizations of a locally free sheaf $F$, corresponding to 
a similar class $f_\ell \in Ext^1_{\mathcal{O}_{\hbar}}
(F_k, \hbar^{k+1} \cdot F_s)$.

Given a morphism $\varphi_k: E_k \to F_k$ of sheaves of
$\mathcal{O}_\hbar$-modules, we would like to know 
whether it can be lifted to a morphism $\varphi_\ell: E_\ell
\to F_\ell$. Note that as $s \leq k$, the reduction of 
$\varphi_k$ mod $\hbar^{s+1}$ gives a morphism $\varphi_s:
E_s \to F_s$.
So we would like to have a morphism $\varphi_\ell$ which
restricts to $\hbar^{k+1} \cdot \varphi_s$ on $\hbar^{k+1} \cdot E_s$
and reduces to $\varphi_k$ modulo $\hbar^{k+1}$. 
By the Baer Criterion, cf. Section 3.4 of \cite{Weibel}, existence
of such morphism 
of extensions is equivalent to 
\begin{equation}
\label{morphism_obstruction}
O : = (\hbar^{k+1} \cdot \varphi_s) \circ e_\ell - f_\ell \circ 
\varphi_k = 0 \in Ext^1_{\mathcal{O}_\hbar}(E_k, \hbar^{k+1} \cdot F_s).
\end{equation}
where $\circ$ stands for the Yoneda pairing. Indeed, $E_\ell$
is quasi-isomorphic to the derived category 
object $E_k \to \hbar^{k+1} \cdot E_s[1]$
(with the arrow induced by the extension class),
and the vanishing of the obstruction ensures that there is 
a morphism into a object $F_k \to \hbar^{k+1}\cdot F_s[1]$.
The resulting morphism on cohomology gives the required
$E_\ell \to F_\ell$. 

What if $O \neq 0$? In that case we could adjust our choices
of $E_\ell, F_\ell$, replacing them with new lifts 
$E_\ell', F_\ell'$ of the same order $k$ quantizations
$E_k, F_k$. 

\begin{prop}
The obstruction class $O$ belongs to a canonical subgroup of 
$Ext^1_{\mathcal{O}_\hbar}(E_k, \hbar^{k+1} \cdot F_s)$
isomorphic to $H^1(M, 
\mathcal{H}om_{\mathcal{O}_\hbar}(E_s, F_s))$.
Suppose that for $a \in H^1(M, End_{\mathcal{O}_\hbar} (E_s)), 
b \in H^1(M, End_{\mathcal{O}_\hbar} (F_s))$ we have
$$
O = b \circ \varphi_k - (\varphi_s) \circ a
$$
in $H^1(M, \mathcal{H}om_{\mathcal{O}_\hbar}(E_s, F_s))$. Then there exist
order $\ell$  quantizations $E'_\ell$, $F'_\ell$ extending 
$E_k, F_k$, respectively, and a morphism $\varphi_\ell: 
E'_\ell \to F'_\ell$ lifting $\varphi_k$. In this case $\varphi_\ell$
is determined uniquely up to an element of 
$H^0(M, 
\mathcal{H}om_{\mathcal{O}_\hbar}(E_s, F_s)) = Hom_{\mathcal{O}_\hbar}
(E_s, F_s)$.
\end{prop}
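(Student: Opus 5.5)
The plan is to compute $Ext^1_{\mathcal{O}_\hbar}(E_k, \hbar^{k+1}\cdot F_s)$ with the local-to-global spectral sequence
$$
E_2^{p,q} = H^p\bigl(M, \mathcal{E}xt^q_{\mathcal{O}_\hbar}(E_k, \hbar^{k+1}\cdot F_s)\bigr) \Rightarrow Ext^{p+q}_{\mathcal{O}_\hbar}(E_k, \hbar^{k+1}\cdot F_s).
$$
Its low-degree exact sequence gives an injection $H^1(M, \mathcal{H}om_{\mathcal{O}_\hbar}(E_k, \hbar^{k+1}F_s)) \hookrightarrow Ext^1$ with cokernel mapping into $H^0(M,\mathcal{E}xt^1)$. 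Since $\hbar^{s+1}$ kills $F_s$ and $E_k$ is locally free over $\mathcal{O}_\hbar/\hbar^{k+1}$ with $s\le k$, we get $\mathcal{H}om_{\mathcal{O}_\hbar}(E_k, F_s) = \mathcal{H}om_{\mathcal{O}_\hbar}(E_s, F_s)$. This produces the canonical subgroup.

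To see that $O$ lies in it, I will show that its image in $H^0(M,\mathcal{E}xt^1)$ vanishes, which is a local statement. On an affine open $U$ where, by Theorem \ref{completion}, both $E$ and $F$ are trivial and $E_\ell$, $F_\ell$ are free over $\mathcal{O}_\hbar/\hbar^{\ell+1}$, the map $\varphi_k$ is given by a matrix over $\mathcal{O}_\hbar/\hbar^{k+1}$. Any lift of its entries to $\mathcal{O}_\hbar/\hbar^{\ell+1}$ gives a local lift $\varphi_\ell^U$. So $O|_U=0$, which is exactly the vanishing of the local Ext class.

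I would then identify $O$ in Čech terms. Choose local lifts $\varphi^U_\ell$ on a cover. The differences $\varphi^U_\ell-\varphi^V_\ell$ on overlaps take values in $\hbar^{k+1}F_\ell\cong \hbar^{k+1}\cdot F_s$ and factor through $E_s$, so they form a Čech 1-cocycle in $\mathcal{H}om(E_s,F_s)$ whose class is $O$; any sign convention can be absorbed. Similarly, by Theorem 1 (or its Čech version), a different extension $E'_\ell$ of $E_k$ corresponds to $a\in H^1(M,End_{\mathcal{O}_\hbar}(E_s))$. Concretely, $E'_\ell$ is obtained by regluing the local pieces $E_\ell|_U$ by $\mathrm{Id}+\hbar^{k+1}a_{UV}$, and $F'_\ell$ is obtained from $b$ in the same way.

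The next step is to show that the new obstruction is $O' = O - b\circ\varphi_k + \varphi_s\circ a$. This is a direct cocycle computation: the same local lifts $\varphi^U_\ell$, compared across the modified gluings, change their Čech differences by $b_{UV}\varphi_s - \varphi_s a_{UV}$. This uses $\hbar^{k+1}\cdot\hbar^{k+1}=0$ modulo $\hbar^{\ell+1}$, which holds because $\ell\le 2k+1$.

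Under the hypothesis of the proposition, $O'=0$, so the local lifts can be corrected by a 0-cochain and glued into a global $\varphi_\ell: E'_\ell\to F'_\ell$. Two global lifts differ by a global section of $\mathcal{H}om(E_k,\hbar^{k+1}F_s)=\mathcal{H}om(E_s,F_s)$, which gives the uniqueness statement.

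I expect the main obstacle to be checking that the Čech description of $O$ agrees with the Yoneda expression \eqref{morphism_obstruction}, and getting the signs in $O'$ consistent with the parametrization in Theorem 1. I would handle both by working throughout with the Čech model of extensions that become split on each $U$.
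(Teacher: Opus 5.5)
Your proof is correct. It shares the paper's starting point (the local-to-global spectral sequence) and its uniqueness argument, but the two central steps go differently.

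The paper computes $\mathcal{E}xt^q_{\mathcal{O}_\hbar}(E_k,F_s)$ by a change-of-rings spectral sequence: only $q=0,1$ survive, both $\simeq\mathcal{H}om_{\mathcal{O}_\hbar}(E_s,F_s)$. It then asserts that the class of any order $\ell$ quantization maps to $\hbar^{k+1}\cdot Id$ in $H^0$. By compatibility with Yoneda products, both terms of $O$ map to $\hbar^{k+1}\varphi_s$, so $O$ lies in the $H^1$ subgroup. The modification step is then purely formal. The classes $e_\ell+a$ and $f_\ell+b$ are still quantizations because their $H^0$-images are unchanged, and $O'=O+\varphi_s\circ a-b\circ\varphi_k=0$ by bilinearity of the Yoneda pairing.

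You instead use only the five-term exact sequence. You kill the $H^0(M,\mathcal{E}xt^1)$-component of $O$ by the local existence of lifts, using local freeness from Theorem \ref{completion}. You then build $E'_\ell,F'_\ell$ by explicit regluing with $\mathrm{Id}+\hbar^{k+1}a_{UV}$. This makes it evident that they are quantizations, and it shows exactly where $\ell\le 2k+1$ enters (the cocycle condition of the regluing). Your route is more elementary: it avoids both the change-of-rings computation and the paper's unproved ``straightforward computation'' characterizing quantization classes. The paper's route is shorter given the Ext formalism, and it yields the extra fact $\mathcal{E}xt^1_{\mathcal{O}_\hbar}(E_k,F_s)\simeq\mathcal{H}om_{\mathcal{O}_\hbar}(E_s,F_s)$.

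The cost on your side is the comparison you flag. Since $e_\ell,f_\ell$ are not locally split, you cannot represent them separately by \v{C}ech cocycles. What you need is that $(\hbar^{k+1}\varphi_s)_*E_\ell$ and $\varphi_k^*F_\ell$ are locally isomorphic via the $\varphi^U_\ell$, together with the standard fact that the Baer difference of two locally isomorphic extensions is represented by the cocycle of differences of the local isomorphisms. Any global sign in this identification is absorbed by regluing with $\pm a,\pm b$, since the conclusion is existential.
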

\textbf{Proof.} First consider the standard local-to-global spectral 
sequence for Ext groups with the second term
$$
E_2^{pq} = H^p(M, \mathcal{E}xt^q_{\mathcal{O}_\hbar}(E_k, F_s)) \Rightarrow 
Ext^{p+q}_{\mathcal{O}_\hbar}(E_k, F_s) 
$$
Next, observe that by change of rings spectral sequence and the 
fact that $E_k$ is induced via the morphism $\mathcal{O}_\hbar \to 
\mathcal{O}_k$, we have another spectral sequence with 
$$
E_2^{pq} = \mathcal{E}xt^{p}_{\mathcal{O}_k}(E_k, 
\mathcal{E}xt^q_{\mathcal{O}_\hbar} (\mathcal{O}_k, F_s))
\Rightarrow \mathcal{E}xt^{p+q}_{\mathcal{O}_\hbar}(E_k, F_s)
$$
As $E_k$ is projective over $\mathcal{O}_k$, in the latter spectral 
sequence only the terms with $p=0$ are nonzero. Further, since 
$\mathcal{O}_k$ has an $\mathcal{O}_\hbar$-projective resolution $\mathcal{O}_\hbar \xrightarrow{\hbar^{k+1}} 
\mathcal{O}_\hbar$, the nonzero terms only occur for $q=0, 1$ and in this case
$$
\mathcal{E}xt^0_{\mathcal{O}_\hbar} (\mathcal{O}_k, F_s) \simeq
\mathcal{E}xt^1_{\mathcal{O}_\hbar} (\mathcal{O}_k, F_s)
\simeq F_s
$$
as $s \leq k$ and thus $\hbar^{k+1}$ acts by zero on $F_s$. Note also 
that 
$$
\mathcal{H}om_{\mathcal{O}_k} (E_k, F_s) \simeq 
\mathcal{H}om_{\mathcal{O}_\hbar} (E_k, F_s) \simeq 
\mathcal{H}om_{\mathcal{O}_\hbar} (E_s, F_s)
$$
as $s \leq k$. So the former local-to-global spectral sequence has two nontrivial 
rows, for $q = 0, 1$ and both are given by $H^p(M, \mathcal{H}om_{\mathcal{O}_\hbar} (E_s, F_s))$. 
In particular, only the $E_2$ term can have nonzero differentials and 
the spectral sequence information essentially reduces to the 
long exact sequence
$$
0 \to H^1(M, \mathcal{H}om_{\mathcal{O}_\hbar} (E_s, F_s)) \to 
Ext^1_{\mathcal{O}_\hbar} (E_k, F_s) \to 
H^0(M, \mathcal{H}om_{\mathcal{O}_\hbar} (E_s, F_s)) \to 
H^2(M, \mathcal{H}om_{\mathcal{O}_\hbar} (E_s, F_s)) \to \ldots
$$

\medskip
\noindent
In particular, we can apply this to the situation when $F = E$. Note that not any 
extension of $E_k$ by $\hbar^{k+1} E_s$ is a quantization 
to order
$\ell$. For example, this is not true for the direct sum extension (corresponding
to the zero extension class). More precisely, the extension gives a 
quantization of order $\ell$ when its class is mapped by 
$$
Ext^1_{\mathcal{O}_\hbar} (E_k, \hbar^{k+1} \cdot E_s) \to 
H^0(M, \hbar^{k+1}\cdot \mathcal{H}om_{\mathcal{O}_\hbar} (E_s, E_s)) 
$$
to the constant section $\hbar^{k+1} \cdot Id_{E_s}$ (this is a straightforward
computation since we know 
by now that locally $E_\ell$ is free over $\mathcal{O}_\hbar/\hbar^{\ell} \mathcal{O}_\hbar$). 
Since the local-to-global spectral sequence agrees with Yoneda products, both terms in 
the definition $O : = (\hbar^{k+1} \varphi_s) \circ e_\ell - f_\ell \circ 
\varphi_k$ are elements of  $Ext^1_{\mathcal{O}_\hbar}(E_k, \hbar^{k+1} \cdot F_s)$ 
that map to $\hbar^{k+1} \cdot \varphi_s \in H^0(M, \hbar^{k+1}\cdot \mathcal{H}om_{\mathcal{O}_\hbar} (E_s, E_s))$. 
Their difference maps to zero, and by the previous long exact 
sequence $O$ belongs to the image of 
$H^1(M, \hbar^{k+1} \cdot \mathcal{H}om_{\mathcal{O}_\hbar} (E_s, F_s)) \to 
Ext^1_{\mathcal{O}_\hbar} (E_k, \hbar^{k+1}\cdot F_s)$. 

Finally, suppose $a, b$ are as claimed in the proposition. Replacing extension classes
$e_\ell, f_\ell$ by $e_\ell+a, f_\ell + b$, respectively, we obtain new
order $\ell$ quantizations $E'_\ell, F'_\ell$ (since addition of $a$ does not 
change the image of the extension class in 
$H^0(M, \hbar^{k+1}\mathcal{H}om_{\mathcal{O}_\hbar} (E_s, E_s)) $, and 
similarly for $F_s$ and $b$). 
The new obstruction class
$O'$ is zero, hence the lift $\varphi_\ell$ of $\varphi_k$ exists
as a morphism $E'_\ell \to F'_\ell$. 

Finally, if $\varphi_\ell, \varphi'_\ell$ are two different extentions
then their post-compositions with the projection $F_\ell \to F_k$
descend from $E_\ell$ to $E_k$ and
agree with $\varphi_k$. 
Hence their difference may be viewed as a morphism 
$E_\ell \to \hbar^{k+1}\cdot F_s$. Such a morphism necessarily vanishes on 
multiples of $\hbar^{s+1}$ hence may be viewed as a
morphism $E_s \to \hbar^{k+1}\cdot E_s$ (and hence we can use the Kodaira
vanishing theorem). 
$\square$

\bigskip
\noindent
\textbf{Examples.} If the group $H^1(M, 
\mathcal{H}om_{\mathcal{O}_\hbar}(E_s, F_s))$ vanishes, then 
the obstruction $O$ must vanish too. From the spectral sequence 
associated with the filtration by powers of $\hbar$ we see 
that $O$ vanishes if $H^1(M, \mathcal{H}om_{\mathcal{O}_M}
(E, F)) = 0$. In particular this would be the case whenever $M$ 
is affine, or when $F$ is a twist of $E$ by an ample enough line bundle. 

\bigskip
\bigskip
\bigskip
\bigskip
\bigskip

\textit{Email address}: vbaranov@math.uci.edu

\bigskip

Department of Mathematics, UC Irvine, Irvine, CA 92697-3875, USA

\bigskip
\bigskip

\textit{Email address}: gh\,\_\,tufu@cosmology.name

\bigskip

Technology United for Ukraine, Kyiv, Ukraine

\end{document}